\DeclareMathOperator{\Ass}{Ass}
\def\N{\mathbb{N}} 
\def\m{\mathfrak{m}}
\def\n{\mathfrak{n}}
\def\gr{\text{gr}} \def\Ie{I_{\epsilon}}
\def\grm{\text{gr}_{\mathfrak m}}
\def\im{{\rm im}}
\def\ker{{\rm ker}}
\def\Me{{M_{\epsilon}}}
\newcommand\sbullet[1][.5]{\mathbin{\vcenter{\hbox{\scalebox{#1}{$\bullet$}}}}}
\newtheorem{theorem}{Theorem}[section]
\newtheorem{lemma}[theorem]{Lemma}
\newtheorem{corollary}[theorem]{Corollary}
\newtheorem{proposition}[theorem]{Proposition}
\newtheorem*{TheoremA}{Theorem A}
\newtheorem*{TheoremB}{Theorem B}
\newtheorem*{TheoremC}{Theorem C}
\theoremstyle{remark}
\newtheorem{remark}[theorem]{\bf Remark}
\theoremstyle{definition}
\newtheorem{example}[theorem]{\bf Example}
\newtheorem{question}[theorem]{\bf Question}
\numberwithin{equation}{section}
\newcommand{\thistheoremname}{}
\newtheorem{genericthm}[theorem]{\thistheoremname}
  \newtheorem*{genericthm*}{\thistheoremname}
\newenvironment{namedthm*}[1]
  {\renewcommand{\thistheoremname}{#1}%
   \begin{genericthm*}}
  {\end{genericthm*}}
\begin{document}

\title[Local cohomology under small perturbations]{Local cohomology under small perturbations}

\author[L. Duarte]{Lu\'is Duarte}
\address{Dipartimento di Matematica \\
Universit\`a degli Studi di Genova \\
Via Dodecaneso 35\\
I-16146 Genova, Italia}
\email{duarte@dima.unige.it}

\date{\today}
\keywords{Perturbation, Hilbert function, local cohomology,  initial module, generalized Cohen-Macaulay ring, Buchsbaum ring}

\begin{abstract} 
Let $(R,\m)$ be a Noetherian local ring and $I$ an ideal of $R$. We study how local cohomology modules with support in $\m$ change for small perturbations $J$ of $I$, that is, for ideals $J$ such that $I\equiv J\bmod \m^N$ for large $N$, under the hypothesis that $I$ and $J$ share the same Hilbert function. 
As one of our main results, we show that if $R/I$ is generalized Cohen-Macaulay, then the local cohomology modules of $R/J$ are isomorphic to the corresponding local cohomology modules of $R/I$, except possibly the top one. In particular, this answers a question raised by Quy and V. D. Trung. Our approach also allows us to prove that if $R/I$ is Buchsbaum, then so is $R/J$. Finally, under some additional assumptions, we show that if $R/I$ satisfies Serre's property $(S_n)$, then so does $R/J$.
\end{abstract}

\subjclass[2020]{13C05, 13D03, 13D40, 13D45}

\maketitle

\section{Introduction}

Let $(R,\m)$ be a Noetherian local ring and $I=(f_1,\ldots,f_r)$ be an ideal of $R$. In this paper we study ideals $J$ that satisfy $I\equiv J\bmod \m^N$ for some integer $N>0$, which we call pertubations of $I$. 
A natural way to obtain such ideals is by altering generators of $I$ by elements of $\m^N$, that is, to consider ideals of the form $\Ie=(f_1+\epsilon_1,\ldots,f_r+\epsilon_r)$ with $\epsilon_1,\ldots,\epsilon_r\in\m^N$. This example is particularly relevant when $R$ is a ring of formal power series over a field, and $f_1+\epsilon_1,\ldots,f_r+\epsilon_r$ are polynomial truncations of $f_1,\ldots,f_r$. It then becomes of great importance to know when a given property is preserved under sufficiently small perturbations, as this is related to the problem of trying to approximate analytic singularities by algebraic singularities. 

Many efforts have been made in the literature to determine under which assumptions the two rings $R/J$ and $R/I$ are isomorphic (see \cite{CutkoskySrinivasan}, \cite{cutkosky1997finitedeterminancy}, \cite{greuel2019finite}, \cite{konrad2020criterionequivalence} and \cite{samuel56algebricite}). However, this turns out to be a rather restrictive condition, usually requiring the rings to have very good singularities. For this reason, efforts shifted towards finding conditions that ensure that $R/I$ and $R/J$, even if not isomorphic, still share some relevant attributes.

 In this context, one feature which has been extensively investigated in the literature is the Hilbert function (see \cite{duarte2021betti}, \cite{ma2019filter},  \cite{quy2021when}, \cite{quy2020small}, \cite{srinivas1996invariance} and \cite{srinivas1997finiteness}). The study of its behaviour under small perturbations started with the work of Srinivas and Trivedi in \cite{srinivas1996invariance}. They showed that the Hilbert-Samuel function of a sufficiently small perturbation is at most the original Hilbert-Samuel function (\cite[Lemma 3]{srinivas1996invariance}). Moreover, assuming $R$ is generalized Cohen-Macaulay and $f_1,\ldots,f_r$ is part of a system of parameters of $R$, they showed that for $N\gg 0$ the Hilbert function of $I=(f_1,\ldots,f_r)$ coincides with that of $\Ie=(f_1+\epsilon_1,\ldots,f_r+\epsilon_r)$ for any $\epsilon_1,\ldots,\epsilon_r\in \m^N$ (\cite[Corollary 5]{srinivas1996invariance}). 
The authors asked if this same result holds true whenever $f_1,\ldots,f_r$ is a filter-regular sequence in any Noetherian local ring $R$. This question was later positively answered by Ma, Quy and Smirnov \cite{ma2019filter}. Then, in \cite{quy2021when}, Quy and Ngo Viet Trung were able to find a different approach to the main result of \cite{ma2019filter}, one which allowed them to obtain a specific value for the number $N$ for which the result is valid.
We also point out that in \cite[Theorem 4.5]{duarte2021betti} the author describes another instance in which the Hilbert function is preserved under small perturbations.

In general, the behaviour of a perturbation $J$ of $I$ can be very different from that of $I$. As such, in order to guarantee that $R/J$ shares similar properties to $R/I$, we will impose some assumptions on $R/J$. In this direction, the work of Srinivas and Trivedi leads us to consider perturbations which do not change the Hilbert function. This can be viewed as saying that the singularity of a perturbation is not worse than the original singularity.    
While two ideals with the same Hilbert function do not necessarily share other similar features (such as being Cohen-Macaulay, or generalized Cohen-Macaulay), in the case of perturbations they often do: 
in the main result of \cite{duarte2021betti} the author has shown that, for any $p\in\mathbb N$, any sufficiently small perturbation $J$ of $I$ with the same Hilbert function as $I$ is such that the Betti numbers $\beta_i^R(R/I)$ and $\beta_i^R(R/J)$ coincide for $0\le i\le p$. 

In this paper we continue along this line of investigation by studying how local cohomology modules of $R/I$ with support in the maximal ideal $\m$ are affected by considering sufficiently small perturbations $J$ of $I$, provided $I$ and $J$ share the same Hilbert function. 
Our motivation partially originates from \cite{quy2020small}, where Quy and Van Duc Trung ask the following:
\begin{question}
If $f_1,\ldots,f_r$ is a filter-regular sequence in $R$, does there exist $N>0$ such that, for every $\epsilon_1,\ldots,\epsilon_r\in\m^N$ we have $\ell(H^0_{\m}(R/(f_1,\ldots,f_r)))=\ell(H^0_{\m}(R/(f_1+\epsilon_1,\ldots,f_r+\epsilon_r)))?$
\end{question}
 
Since, by \cite{ma2019filter}, sufficiently small perturbations of filter-regular sequences preserve the Hilbert function, this is just one instance of the problem studied in this paper. In light of this fact, one could even ask the following more general question: given an ideal $I$, does there exist $N>0$ such that $H^0_{\m}(R/I)\cong H^0_{\m}(R/J)$ whenever $J$ is such that $J \equiv I \bmod \m^N$ and $I$ and $J$ have the same Hilbert function?
We provide a positive answer to the question of \cite{quy2020small} in its stronger form. Indeed, we obtain a result which also holds for higher local cohomology modules, provided they are finitely generated. 

\begin{TheoremA}[Theorem \ref{isomorphism}]
Let $(R,\m)$ be a Noetherian local ring and $I$ be an ideal of $R$ such that $H_{\m}^i(R/I)$ are finitely generated for every $i=0,1,\ldots,p$. There exists $N>0$ with the following property: for every ideal $J$ such that $J \equiv I \bmod \m^N$ and such that $I$ and $J$ have the same Hilbert function, we have that
$$H_{\m}^i(R/I)\cong H_{\m}^i(R/J)\quad \forall i=0,1,\ldots,p.$$
\end{TheoremA}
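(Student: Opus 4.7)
The plan is to compare $H^i_\m(R/I)$ and $H^i_\m(R/J)$ by factoring both through the common quotient $\bar R := R/(I+\m^N) = R/(J+\m^N)$ and invoking the Betti number matching of \cite{duarte2021betti}.

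First I would note that, since $H^i_\m(R/I)$ is finitely generated for each $i\le p$ and local cohomology of a finitely generated module over a Noetherian local ring is Artinian, each such $H^i_\m(R/I)$ has finite length. This is crucial: it implies that $H^i_\m(R/I)$ can be computed at a bounded level, for instance as the stable value of the Koszul cohomology $H^i(\underline{x}^n; R/I)$ for $n$ large, where $\underline{x}$ generates $\m$, or equivalently as a quotient of $\operatorname{Ext}^i_R(R/\m^{n_0}, R/I)$ for some finite $n_0$.

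Next I would choose $N$ much larger than: (i) the bound $n_0$ above; (ii) the Artin--Rees constant $c$ for $I$ in $R$; (iii) the threshold from \cite{duarte2021betti} guaranteeing $\beta_i^R(R/I) = \beta_i^R(R/J)$ for $i=0,\ldots,p+1$. Forming $\bar R := R/(I+\m^N) = R/(J+\m^N)$, I would then examine the two short exact sequences
$$0 \to \m^N/(\m^N \cap I) \to R/I \to \bar R \to 0, \qquad 0 \to \m^N/(\m^N \cap J) \to R/J \to \bar R \to 0.$$
By Artin--Rees, $\m^N \cap I = \m^{N-c}(\m^c \cap I)$ for $N\ge c$, so the left-hand kernels sit deep inside the respective $\m$-adic filtrations; and by the matching Hilbert function hypothesis these two kernels have the same Hilbert function. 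Applying $H^\bullet_\m(-)$ yields long exact sequences in which the middle terms $H^i_\m(\bar R)$ are shared, so the comparison reduces to identifying the outer terms $H^i_\m(\m^N/(\m^N\cap I))$ with $H^i_\m(\m^N/(\m^N\cap J))$ and matching the connecting morphisms.

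The hard part will be upgrading the numerical matching (equal Hilbert functions of the kernels, equal Betti numbers) to genuine isomorphisms of $R$-modules compatible with the connecting maps of the two long exact sequences. This is precisely the point where \cite{duarte2021betti} enters: for $N$ large and matching Hilbert functions, the initial pieces of the minimal free resolutions of $R/I$ and $R/J$ agree up to homological degree $p+1$, so any homological invariant computed from this bounded portion of the resolution will match. Feeding this into the long exact sequences above should yield the desired isomorphisms $H^i_\m(R/I) \cong H^i_\m(R/J)$ for $i=0,1,\ldots,p$, and as a byproduct force $H^i_\m(R/J)$ to be finitely generated as well.
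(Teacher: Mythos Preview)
Your proposal has a genuine gap at the step you yourself flag as ``the hard part,'' and the reduction via $\bar R$ does not actually simplify the problem. Since $\bar R = R/(I+\m^N)$ is killed by $\m^N$ and hence has finite length, one has $H^i_\m(\bar R)=0$ for all $i\ge 1$; so for $i\ge 2$ your long exact sequence collapses to $H^i_\m(R/I)\cong H^i_\m\bigl(\m^N(R/I)\bigr)$ and likewise for $J$. You have therefore traded the comparison of $H^i_\m(R/I)$ with $H^i_\m(R/J)$ for the comparison of $H^i_\m\bigl(\m^N(R/I)\bigr)$ with $H^i_\m\bigl(\m^N(R/J)\bigr)$. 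But $\m^N(R/I)$ and $\m^N(R/J)$ are submodules of \emph{different} quotient rings, and the only relation you have between them is equality of Hilbert functions; that does not produce an $R$-module isomorphism, so you are back where you started. The cases $i=0,1$ are worse, since there you must also match the connecting maps into and out of $\bar R$.

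More fundamentally, nowhere in the outline do you construct a map between anything attached to $R/I$ and anything attached to $R/J$. All of the input you invoke from \cite{duarte2021betti} is numerical (equal Betti numbers, equal Hilbert functions), and numerical equalities cannot be assembled into module isomorphisms, let alone ones compatible with two separate long exact sequences. The paper's proof proceeds quite differently: after completing and writing $R=S/L$ with $S$ regular, it uses local duality to replace $H^i_\m(R/I)$ and $H^i_\m(R/J)$ by the cohomologies of $\text{Hom}_S(F_\bullet,S)$ and $\text{Hom}_S(F^\epsilon_\bullet,S)$. The key input from \cite{duarte2021betti} is not the equality of Betti numbers but the existence of the resolution $F^\epsilon_\bullet$ of $R/J$ as an explicit $\n$-adic perturbation of $F_\bullet$. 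Using Artin--Rees bounds and the finite-length hypothesis on the cohomologies, the paper then builds, by an inductive lifting argument, an actual isomorphism of truncated complexes $\tau_{d-p-1}\text{Hom}_S(F_\bullet,S)\to \tau_{d-p-1}\text{Hom}_S(F^\epsilon_\bullet,S)$ whose components have the form $\text{id}+\alpha_n$ with $\alpha_n$ small. That explicit map is exactly what your outline is missing, and without it the argument cannot close.
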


Theorem A significantly extends \cite[Theorem 3.2]{quy2020small} by removing the assumptions that $R$ is generalized Cohen-Macaulay and $I$ is generated by a system of parameters.

The starting point for the proof of Theorem A will be the use of the main result of \cite{duarte2021betti}. When $R/I$ has finite projective dimension, this result allows us to compare the whole free resolution of $R/J$ with that of $R/I$.
We say that a complex $C_\epsilon$ is a perturbation of a complex $C$ if $C_\epsilon$ is obtained by perturbing the maps of $C$ by maps $\epsilon$ with image in a sufficiently large power of $\m$. Perturbations of complexes have been studied for example by Eisenbud in \cite{eisenbud1974adic}, where the homology of $C_\epsilon$ is compared with that of $C$. In fact, throughout this article we will extensively use a slightly improved version of Eisenbud's theorem (see Theorem \ref{T-EisenbudComplexPerturbation}). Other related articles are \cite{aberbach2015uniform} and \cite{eisenbud2005finiteness}, where, assuming $C$ is a free resolution, the authors study the existence of uniform bounds for how small the perturbations of the maps of $C$ need to be so that the complex $C_\epsilon$ is also exact. In these results, however, it is always assumed that the perturbation $C_\epsilon$ of $C$ is already a complex. The main result of \cite{duarte2021betti}, on the other hand, shows that we can in fact build a complex (actually, a free resolution of $R/J$) which is a perturbation of a free resolution of $R/I$.


Under weaker assumptions than those of Theorem A, we show that one can compare the lengths of $H^i_{\m}(R/I)$ and $H^i_{\m}(R/J)$, as well as the annihilators of $H_{\m}^i(R/I)$ and $H^i_\m(R/J)$ (see Theorems \ref{T-LClength} and \ref{PowerM}). 
Theorem A has several important consequences, see Corollary \ref{corollaryl}. For instance, we deduce that properties such as being Cohen-Macaulay or being generalized Cohen-Macaulay are preserved under small enough perturbations, assuming that the ideal and its perturbation share the same Hilbert function.  

Finally, we focus on the property of being Buchsbaum. Thanks to the techniques developed in this paper we are able to compare the truncated normalized dualizing complexes of $R/I$ and $R/J$ provided $R/I$ is generalized Cohen-Macaulay, showing that they are isomorphic in the derived category. As a consequence, we deduce that the Buchsbaum property is also preserved under sufficiently small pertubations which preserve the Hilbert function.

\begin{TheoremB}[Theorem \ref{Buchsbaum}]
Let $(R,\m)$ be a Noetherian local ring and $I$ be an ideal of $R$ such that $R/I$ is generalized Cohen-Macaulay. There exists $N>0$ with the following property: for every ideal $J$ such that $J \equiv I \bmod \m^N$ and such that $I$ and $J$ have the same Hilbert function, we have that $R/I$ is Buchsbaum if and only if $R/J$ is Buchsbaum. 
\end{TheoremB}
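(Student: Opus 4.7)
The plan is to deduce Theorem B from two ingredients: the derived-category isomorphism of the truncated normalized dualizing complexes $D^\bullet_{R/I}\simeq D^\bullet_{R/J}$, announced in the paragraph just before the theorem statement, together with Schenzel's characterization of the Buchsbaum property via the truncated dualizing complex.

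First I would apply Theorem A with $p=d-1$, where $d=\dim(R/I)$. Since $R/I$ is generalized Cohen-Macaulay, $H^i_{\m}(R/I)$ has finite length (hence is finitely generated) for every $i<d$, so Theorem A supplies an $N$ for which any perturbation $J\equiv I \bmod \m^N$ with the same Hilbert function as $I$ satisfies $H^i_{\m}(R/J)\cong H^i_{\m}(R/I)$ for all $i<d$. In particular $\dim(R/J)=d$ and $R/J$ is itself generalized Cohen-Macaulay, so $D^\bullet_{R/J}$ is defined and has finite length cohomology. Enlarging $N$ if necessary, I would then invoke the derived-category isomorphism $D^\bullet_{R/I}\simeq D^\bullet_{R/J}$; the heart of its proof is to take a free resolution of $R/I$ over a Gorenstein ring surjecting onto $R$, build a compatible perturbed resolution of $R/J$ via \cite{duarte2021betti}, dualize both, and apply the Eisenbud perturbation theorem (Theorem \ref{T-EisenbudComplexPerturbation}) to identify the two truncations in the derived category --- the truncation kills precisely the top cohomology (the canonical modules), which is the one piece not controlled by Theorem A.

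Finally I would invoke Schenzel's criterion: a generalized Cohen-Macaulay local ring $A$ is Buchsbaum if and only if $D^\bullet_A$ is isomorphic in the derived category to the complex $\bigoplus_i H^{-i}(D^\bullet_A)[i]$ with zero differentials. Since this is a property of the isomorphism class of $D^\bullet$ in the derived category, the isomorphism $D^\bullet_{R/I}\simeq D^\bullet_{R/J}$ immediately transfers the Buchsbaum property between $R/I$ and $R/J$. The main obstacle in this plan is the middle step: local duality combined with Theorem A gives isomorphisms of the individual cohomology modules, but Schenzel's criterion requires a coherent quasi-isomorphism of the whole truncated complexes, which forces one to work at the level of free resolutions and to apply a perturbation theorem for complexes rather than merely for their cohomology.
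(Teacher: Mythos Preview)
Your proposal is correct and matches the paper's approach: after passing to the completion, the paper identifies the truncated normalized dualizing complexes of $R/I$ and $R/J$ with $(\tau_{d-d'}\text{Hom}_S(F_\bullet,S))[d]$ and $(\tau_{d-d'}\text{Hom}_S(F^\epsilon_\bullet,S))[d]$, and then uses the explicit chain-map isomorphism $\text{id}+\alpha_n$ constructed in the proof of Theorem~\ref{isomorphism} (rather than Theorem~\ref{T-EisenbudComplexPerturbation} directly) to identify them in $\mathcal D(R)$. The paper invokes Schenzel's criterion in the form ``$\tau_{-d'}D^\bullet$ is isomorphic in $\mathcal D(R)$ to a complex of $k$-vector spaces'' rather than formality, but either formulation is a derived-category invariant and your transfer argument goes through unchanged.
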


We also show that, under some additional assumptions, Serre's properties $(S_n)$ are also preserved under small perturbations which preserve the Hilbert function. 

\begin{TheoremC}[Theorem \ref{serreperturbation}]
Suppose $(R,\m)$ is excellent. Let $I$ be an ideal of $R$ for which $R/I$ is formally equidimensional. There exists $N>0$ with the following property: if $J$ is an ideal of $R$ for which $I\equiv J\bmod \m^N$, $R/J$ is formally equidimensional and $R/J$ has the same Hilbert function as $R/I$, then for every $n\ge 0$ if $R/I$ satisfies Serre's property $(S_n)$ then so does $R/J$.
\end{TheoremC}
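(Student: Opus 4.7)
The plan is to characterize Serre's condition $(S_n)$ in terms of dimensions of local cohomology modules and then invoke Theorem \ref{PowerM} to transfer this characterization from $R/I$ to $R/J$.

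First I would reduce to the case where $R$ is complete. Since $R$ is excellent, so are $R/I$ and $R/J$, and their completion maps are faithfully flat with geometrically regular (hence Cohen--Macaulay) fibers. Applying the depth formula $\mathrm{depth}(B_{\mathfrak{q}}) = \mathrm{depth}(A_{\mathfrak{p}}) + \mathrm{depth}(B_{\mathfrak{q}}/\mathfrak{p} B_{\mathfrak{q}})$ to such a faithfully flat extension shows that $R/I$ satisfies $(S_n)$ if and only if $\widehat{R/I}$ does (and similarly for $R/J$). The formal equidimensionality of $R/I$ and $R/J$ yields equidimensionality of their completions; the congruence $I\equiv J \bmod \m^N$ persists after completion; and Hilbert functions are preserved, since they are computed from Artinian quotients. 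So we may assume $R$ is complete.

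By Cohen's structure theorem, $R$ is then a quotient of a regular local ring, and so $R/I$ and $R/J$ are equidimensional quotients of a Cohen--Macaulay ring. Schenzel's characterization then applies: for such an equidimensional local ring $A$ of dimension $d$,
$$A \text{ satisfies } (S_n) \quad \Longleftrightarrow \quad \dim H^i_{\m}(A) \le i - n \text{ for all } i = 0, 1, \ldots, d - 1,$$
where $\dim H^i_{\m}(A) := \dim A/\mathrm{Ann}_A H^i_{\m}(A)$. Since $I$ and $J$ have the same Hilbert function, in particular $d := \dim(R/I) = \dim(R/J)$, and the problem is reduced to matching $\dim H^i_{\m}(R/I)$ and $\dim H^i_{\m}(R/J)$ for $i < d$.

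For the final step I would choose $N$ large enough that Theorem \ref{PowerM} applies and yields a comparison of the annihilators $\mathrm{Ann}_R H^i_{\m}(R/I)$ and $\mathrm{Ann}_R H^i_{\m}(R/J)$ strong enough to force $\dim H^i_{\m}(R/I) = \dim H^i_{\m}(R/J)$ for every $i$ in the relevant range. The condition $\dim H^i_{\m}(R/I) \le i - n$ for $i < d$ then transfers directly to $R/J$, and Schenzel's characterization yields $(S_n)$ for $R/J$. The main obstacle I foresee is confirming that Theorem \ref{PowerM} yields precisely the dimensional equality required rather than a weaker one-sided comparison; however, since the roles of $I$ and $J$ can be reversed (both satisfy the standing hypotheses), a one-sided containment of annihilators can be combined with its mirror image to produce the desired equality of dimensions.
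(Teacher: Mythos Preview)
Your reduction to the complete case and your use of Schenzel's characterization of $(S_n)$ via $\dim H^i_{\m}(R/I)\le i-n$ are fine, and this is essentially the same characterization the paper uses (in its dual form via $\dim_S\operatorname{Ext}_S^{d-i}(R/I,S)$). The gap is in your final step: Theorem~\ref{PowerM} has as a standing hypothesis that $H^i_{\m}(R/I)$ and $H^{i-1}_{\m}(R/I)$ are \emph{finitely generated}, i.e.\ of finite length. In the $(S_n)$ setting these modules are typically not finitely generated---the whole point is that their dimensions may be positive---so Theorem~\ref{PowerM} simply does not apply. Your proposed symmetry argument (swapping the roles of $I$ and $J$) does not rescue this, both because the finiteness hypothesis still fails and because the threshold $N$ in all of these statements depends on $I$; reversing roles would make $N$ depend on the varying ideal $J$.

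The paper avoids this obstacle by working on the dual side, with the finitely generated modules $\operatorname{Ext}_S^{d-i}(\widehat R/\widehat I,S)=H^{d-i}(\operatorname{Hom}_S(F_\bullet,S))$. Theorem~\ref{T-EisenbudComplexPerturbation}(i) then gives, for $N\gg0$, that $H^{d-i}(\operatorname{Hom}_S(F_\bullet^{\epsilon},S))^*$ is a graded subquotient of $H^{d-i}(\operatorname{Hom}_S(F_\bullet,S))^*$, with no finiteness assumption required. A separate lemma (Lemma~\ref{L-dimensionstar}) shows $\dim_R N=\dim_{\gr_\m(R)}N^*$, so the subquotient relation yields the one-sided inequality
\[
\dim_S\operatorname{Ext}_S^{d-i}(\widehat R/\widehat J,S)\le \dim_S\operatorname{Ext}_S^{d-i}(\widehat R/\widehat I,S),
\]
which is exactly what is needed to transfer $(S_n)$ from $R/I$ to $R/J$. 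No equality (and hence no reversal of roles) is required.
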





\section{Preliminaries}\label{preliminaries}

In this section we will introduce some preliminary results and notation which will be used throughout this article.

$(R,\m)$ will denote a Noetherian local ring and $k=R/\m$ will denote its residue field. An element $f \in R$ is called \emph{filter-regular} if $\Ass_R((0\colon (f))) \subseteq \{\m\}$. A sequence of elements $f_1,\ldots,f_r$ of $R$ is called a \emph{filter-regular sequence} if, for every $1 \leq i \leq r$, the image of $f_i$ in $R/(f_1,\ldots,f_{i-1})$ is a filter-regular element. 

Given an $R$-module $M$, we will denote its length by $\ell(M)$. We will use $H_{\m}^i(M)$ to denote the $i$-th local cohomology module of $M$ with respect to the maximal ideal $\m$. 
The ring $R$ is said to be \emph{generalized Cohen-Macaulay} if the local cohomology modules $H_{\m}^i(R)$ have finite length for every $i<\dim(R)$.

The \emph{Hilbert function} $\text{HF}_R\colon \mathbb Z_{\ge 0} \to \mathbb Z_{\ge 0}$ of $R$ is defined to be the Hilbert function of its associated graded ring $\gr_\m(R) = \bigoplus_{i=0}^\infty \m^i/\m^{i+1}$, which is a standard graded $k$-algebra. In other words, $\text{HF}_R$ is given by  $\text{HF}_R(i)=\dim_k(\m^i/\m^{i+1})$ for every $i\ge 0$.

\subsection{Initial modules and ideals: } An \emph{$\m$-filtration} $\mathbb M$ of $M$ is a collection $\{M_i\}_{i\ge 0}$ of submodules of $M$ such that $\m M_i\subseteq M_{i+1}\subseteq M_{i}$ for every $i\ge 0$. 
The filtration $\mathbb M$ is called \emph{stable} (or \emph{good}) if $\m M_n=M_{n+1}$ for all sufficiently large $n$. Given an $\m$-filtration $\mathbb M$, we define the 
\emph{associated graded module} of $M$ with respect to $\mathbb M$ as 
$$\gr_{\mathbb M}(M)=\bigoplus_{i=0}^{\infty}{\frac{M_i}{M_{i+1}}}.$$

Since $\mathbb M$ is an $\m$-filtration, $\gr_{\mathbb M}(M)$ has a natural structure of a graded module over $\grm(R)$.
In case $\mathbb M=\{\m^iM\}_{i\ge 0}$ is the $\m$-adic filtration of $M$, we will denote $\gr_{\mathbb M}(M)$ simply by $\grm(M)$. 


If $N$ is a submodule of 
$M$, we define the \emph{initial module} of $N$, denoted by $N^*$, as the kernel of the graded map of $\grm(R)$-modules 
$$\grm(M)\to \grm(M/N)$$ 
induced by the projection $M\to M/N$, so that $\grm(M/N)\cong \grm(M)/N^*$. We thus have that

$$N^*=\bigoplus_{i=0}^{\infty}{\frac{N\cap \m^iM+\m^{i+1}M}{\m^{i+1}M}} \cong \bigoplus_{i=0}^{\infty}{\frac{N\cap \m^iM}{N\cap \m^{i+1}M}}.$$
Therefore $N^*$ can also be viewed as the associated graded module of $N$ with respect to the filtration $\{N\cap \m^iM\}_{i\ge 0}$. For another description of $N^*$, in terms of initial forms, we refer the reader to \cite{duarte2021betti}.
Whenever we write $N^*$, the inclusion $N\subseteq M$ will be implicit from the context. 

\begin{remark}
In what follows, we will refer to the Hilbert function of an ideal $I \subseteq R$ to mean the Hilbert function of the local ring $R/I$, that is, the Hilbert function of $\gr_{\m/I}(R/I)$. Thanks to the isomorphism described above, it coincides with the Hilbert function of $\gr_\m(R)/I^*$. 
\end{remark}


If $I$ is an ideal of $R$, and $N\subseteq M$ are finitely generated $R$-modules, then by the Artin-Rees lemma there exists a positive number $s$ such that
$$I^nM\cap N=I^{n-s}(I^sM\cap N)\quad \text{for every } n\ge s.$$
We will use $\text{AR}(I,N\subseteq M)$ to denote this Artin-Rees number, that is, the smallest such $s$. 

We thus have that the filtration $\{\m^iM\cap N\}_{i\ge 0}$ of $N$ is stable. Moreover,
according to \cite[Proposition 2.1 (a)]{herzog2016homology}, the number $\text{AR}(\m,N\subseteq M)$ is related to the initial module $N^*$ of $N$. More precisely, $\text{AR}(\m,N\subseteq M)$ coincides with the largest degree of an element in a minimal set of homogeneous generators of $N^*$ as a $\gr_\m(R)$-module. In particular, if we have finitely generated modules $N_1\subseteq M_1$ and $N_2\subseteq M_2$ such that $N_1^*$ and $N_2^*$ are isomorphic as $\gr_\m(R)$-modules, then $\text{AR}(\m,N_1\subseteq M_1)=\text{AR}(\m,N_2\subseteq M_2)$.

The following result states that taking initial modules behaves well with respect to taking quotients. It can be shown as in \cite[Lemma 2.1]{quy2021when}.

\begin{proposition}\label{quotientofstars}
Let $L\subseteq N\subseteq M$ be finitely generated $R$-modules. Then $$\left(\frac{N}{L}\right)^*\cong\frac{N^*}{L^*}.$$
Here $\left(\frac{N}{L}\right)^*$ is computed inside $\emph{\text{gr}}_{\m}(\frac ML)\cong \frac{\emph{\text{gr}}_{\m}(M)}{L^*}$, while $N^*$ and $L^*$ are computed inside $\emph{\text{gr}}_{\m}(M)$.
\end{proposition}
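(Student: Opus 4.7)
The approach is to view the three initial modules $L^*$, $N^*$, and $(N/L)^*$ uniformly as kernels of induced graded maps coming from the natural chain of surjections $M \twoheadrightarrow M/L \twoheadrightarrow M/N$, and then to extract the desired isomorphism by a short diagram chase.

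My first step would be to establish the following general fact: for any surjection $\pi\colon M \twoheadrightarrow M'$ of finitely generated $R$-modules with kernel $K$, the induced graded map $\grm(\pi)\colon \grm(M)\to \grm(M')$ is surjective with kernel $K^*$. Surjectivity is immediate because $\m^i M' = (\m^i M + K)/K$, so $\pi$ maps $\m^i M$ onto $\m^i M'$. The kernel computation reduces, via the modular law, to the identity
$$\m^i M \cap (\m^{i+1} M + K) = \m^{i+1} M + (K \cap \m^i M),$$
whose right-hand side, reduced modulo $\m^{i+1} M$, is precisely the degree-$i$ component of $K^*$.

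I would then apply this fact twice: first to $M \twoheadrightarrow M/L$, then to $M/L \twoheadrightarrow (M/L)/(N/L) = M/N$. The outcome is a composable pair of surjections of graded $\grm(R)$-modules
$$\grm(M) \twoheadrightarrow \grm(M/L) \twoheadrightarrow \grm(M/N),$$
whose kernels are $L^*$ and $(N/L)^*$ respectively. Since the composite is the map induced by $M \twoheadrightarrow M/N$, its kernel is $N^*$ by definition. A routine diagram chase then yields $L^*\subseteq N^*$ together with an isomorphism $N^*/L^* \cong (N/L)^*$ of graded $\grm(R)$-modules, compatible with the identification $\grm(M/L) \cong \grm(M)/L^*$ described in the statement.

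The only point requiring genuine care is the kernel computation in the general fact: showing that an element $m\in \m^i M$ whose class in $\m^i M'/\m^{i+1} M'$ vanishes must lie in $\m^{i+1} M + (K \cap \m^i M)$, i.e. that the kernel of $\grm(\pi)$ is no larger than $K^*$. This follows directly from $\pi^{-1}(\m^{i+1} M') = \m^{i+1} M + K$ combined with the modular law, so no deep input is needed. Everything else in the argument is formal.
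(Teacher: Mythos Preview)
Your argument is correct. The paper itself defers the proof to \cite[Lemma 2.1]{quy2021when}, but the (commented-out) proof in the source carries out a direct degree-by-degree computation: it writes $(N/L)^*_n$ explicitly as $(N\cap\m^nM+\m^{n+1}M+L)/(\m^{n+1}M+L)$, applies the second isomorphism theorem, and then verifies the modular-law identity $(N\cap\m^nM+\m^{n+1}M)\cap(\m^{n+1}M+L)=L\cap\m^nM+\m^{n+1}M$ by hand.

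Your route is a mild repackaging of the same content. Rather than computing in a fixed degree, you invoke once the general fact that for a surjection $\pi\colon M\twoheadrightarrow M'$ with kernel $K$ the induced map $\grm(\pi)$ is surjective with kernel $K^*$ (which is essentially how the paper \emph{defines} $K^*$, so your verification is just matching the definition to the explicit formula), and then read off the result from the factorization $\grm(M)\twoheadrightarrow\grm(M/L)\twoheadrightarrow\grm(M/N)$ via the standard kernel-of-a-composite lemma. The single nontrivial step---the modular-law identity $\m^iM\cap(\m^{i+1}M+K)=\m^{i+1}M+(K\cap\m^iM)$---is the same in both arguments; you simply isolate it once in the ``general fact'' rather than unwinding it inside the specific computation. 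Your organization is arguably cleaner and makes the compatibility with the identification $\grm(M/L)\cong\grm(M)/L^*$ transparent, but there is no substantive difference in mathematical content.
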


\subsection{Approximations of complexes:} We now discuss a result which will be crucial in the proof of our main results ahead. 
Let
$$\begin{tikzcd}
C\colon  \cdots \arrow[r] & C_{n+1} \arrow[r, "f_{n+1}"] & C_{n} \arrow[r, "f_n"] & C_{n-1} \arrow[r] & \cdots \end{tikzcd}$$ 
be a complex of finitely generated $R$-modules.
An $\m$\emph{-adic approximation of} $C $ of order $d=(\ldots,d_{n+1},d_n,d_{n-1},\ldots)\in\mathbb N^{\mathbb Z}$ is a complex $C _{\epsilon}$ of the form $$\begin{tikzcd}[row sep=large,column sep = large]
C _{\epsilon}\colon  \cdots \arrow[r] & C_{n+1} \arrow[r, "f_{n+1}+\epsilon_{n+1}"] & C_{n} \arrow[r, "f_n+\epsilon_n"] & C_{n-1} \arrow[r] & \cdots\end{tikzcd}$$
where $\epsilon_n$ is a map from $C_n$ to $\m^{d_n}C_{n-1}$ for all $n$.

For any integer $n$, let $H_n(C)$ denote the $n$-th homology module of $C$. We will consider the initial module $H_n(C )^* = \bigoplus_p H_n(C)^*_p$ of $H_n(C)$ with respect to the inclusion $H_n(C )\subseteq C_n/\text{im }(f_{n+1})$, where the latter has a natural $\m$-adic filtration induced by $\{\m^i C_n\}_{i \ge 0}$, and we want to compare such an initial module with the one obtained from a perturbed complex $C_\epsilon$. We warn the reader that the initial module of $H_n(C)$ is computed inside $\gr_\m(C_n/{\rm im}(f_{n+1}))$, while the initial module of $H_n(C_\epsilon)$ is computed inside $\gr_\m(C_n/{\rm im}(f_{n+1}+\epsilon_{n+1}))$. 
Despite this difference, the following theorem, which consists of a slight improvement of the main result of \cite{eisenbud1974adic}, shows a close relation between them.

\begin{theorem}\label{T-EisenbudComplexPerturbation}
Let $$\begin{tikzcd}
C\colon  \cdots \arrow[r] & C_{n+1} \arrow[r, "f_{n+1}"] & C_{n} \arrow[r, "f_n"] & C_{n-1} \arrow[r] & \cdots \end{tikzcd}$$ be a complex of finitely generated $R$-modules. There exists a sequence of integers $d=(\ldots,d_{n+1},d_n,d_{n-1},\ldots)$ such that, whenever
$$\begin{tikzcd}[row sep=large,column sep = large]
C _{\epsilon}\colon  \cdots \arrow[r] & C_{n+1} \arrow[r, "f_{n+1}+\epsilon_{n+1}"] & C_{n} \arrow[r, "f_n+\epsilon_n"] & C_{n-1} \arrow[r] & \cdots\end{tikzcd}$$ is a complex which is an $\m$-adic approximation of $C$ of order $d=(\ldots,d_{n+1},d_n,d_{n-1},\ldots)$, we have that
\begin{itemize}
\item[(i)] $H_n(C_\epsilon )^*_p$ is a subquotient of $H_n(C )^*_p$ for all $n$ and all $p$.
\item[(ii)] If $H_n(C )$ and $H_{n-1}(C )$ are both annihilated by some power of $\m$, then $H_n(C )^*\cong H_n(C_\epsilon )^*$. Moreover, in this case we have $\emph{\text{im}}(f_{n+1})^*= \emph{\text{im}}(f_{n+1}+\epsilon_{n+1})^*$ and $\emph{\text{ker}}(f_{n})^*\cong \emph{\text{ker}}(f_{n}+\epsilon_{n})^*$, where all these modules are computed inside $\emph{\text{gr}}_{\m}(C_n)$.
\end{itemize}

\end{theorem}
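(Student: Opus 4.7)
The theorem is a sharpening of Eisenbud's main result in \cite{eisenbud1974adic}, so the plan is to follow his approach for part (i) and then use the finite length hypothesis to upgrade the conclusion in part (ii). I would proceed in three stages: choice of $d$, proof of (i), and enhancement to (ii).

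To choose the order $d = (d_n)$, I would apply the Artin-Rees lemma to the inclusions $\ker(f_n) \subseteq C_n$, $\im(f_{n+1}) \subseteq C_n$, and $\im(f_n) \subseteq C_{n-1}$, obtaining integers $s_n$ controlling when the induced filtrations $\{\m^p C_n \cap \ker(f_n)\}_p$ and their analogues become stable. I would then set each $d_n$ sufficiently larger than $s_n$ and $s_{n-1}$. For (i), I would reproduce Eisenbud's argument: given a cycle $z \in \m^p C_n$ of $C_\epsilon$, the element $f_n(z) = -\epsilon_n(z)$ lies in $\im(f_n) \cap \m^{d_n} C_{n-1}$, and by Artin-Rees together with the choice of $d_n$ I can write $f_n(z) = f_n(w)$ for some $w \in \m^{p+1} C_n$. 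Then $z - w$ is a cycle of $C$ congruent to $z$ modulo $\m^{p+1} C_n$, and a symmetric argument handles boundaries. These explicit assignments produce a graded surjection from a $\grm(R)$-submodule of $H_n(C)^*_p$ onto $H_n(C_\epsilon)^*_p$, yielding the subquotient relation.

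For part (ii), I would exploit the observation that, since each $-\epsilon_n$ also has image in $\m^{d_n} C_{n-1}$, the complex $C$ is itself an $\m$-adic approximation of $C_\epsilon$ of the same order $d$. Thus part (i) applies in both directions, giving mutual subquotient relations between $H_n(C)^*_p$ and $H_n(C_\epsilon)^*_p$ for every $p$. Under the hypothesis that $H_n(C)$ and $H_{n-1}(C)$ are annihilated by some power of $\m$, both initial modules have finite length, so the mutual relations force $\ell(H_n(C_\epsilon)^*_p) = \ell(H_n(C)^*_p)$ in each degree. Combined with the $\grm(R)$-equivariant surjection constructed in the previous stage, this yields the desired isomorphism $H_n(C)^* \cong H_n(C_\epsilon)^*$. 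The additional statements about $\im(f_{n+1})^*$ and $\ker(f_n)^*$ follow by applying the same argument to the truncated complexes terminating at $C_n$, once the relevant image and kernel filtrations are seen to be controlled by the same Artin-Rees numbers.

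The main obstacle is the final step of upgrading mutual subquotient relations into a genuine isomorphism of graded $\grm(R)$-modules: equality of lengths in each degree is not automatically sufficient, and one must verify that the assignments $z \mapsto z - w$ assemble into $\grm(R)$-linear maps and that the noncanonical choices of $w$ can be reconciled. The finite length hypothesis plays a crucial role in both making sense of the counting arguments and in controlling the ambiguity in these choices, so that the explicit surjection from stage (i) actually becomes an isomorphism when source and target have the same finite length.
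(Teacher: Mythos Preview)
Your outline for (i) and for the isomorphism $H_n(C)^* \cong H_n(C_\epsilon)^*$ in (ii) is essentially the approach of the cited references, though your symmetry step deserves care: part (i) asserts the existence of an order $d$ depending on the Artin--Rees constants of the \emph{unperturbed} complex $C$, so applying (i) with the roles of $C$ and $C_\epsilon$ reversed would require $d$ to dominate the Artin--Rees constants of $C_\epsilon$, which a priori vary with the perturbation. This can be repaired, and the paper simply cites \cite{eisenbud1974adic} and \cite[Theorem~2.4]{duarte2021betti} for that portion.

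The genuine gap is in your treatment of the ``moreover'' clause, which is the only part the paper actually proves. You propose to apply ``the same argument'' (mutual subquotient plus a length count) to truncated complexes to handle $\ker(f_n)^*$ and $\im(f_{n+1})^*$. But neither $\ker(f_n)$ nor $\im(f_{n+1})$ has finite length in general---only their quotient $H_n(C)$ does---so the length-counting step is unavailable. The paper takes a different route: it applies (i) to the two-term complex $0 \to C_n \xrightarrow{f_n} C_{n-1} \to 0$ to obtain that $\ker(f_n+\epsilon_n)^*$ is a graded subquotient of $\ker(f_n)^*$, and separately uses \cite[Proposition~3.1]{duarte2021betti} to obtain the one-sided inclusion $\im(f_{n+1})^* \subseteq \im(f_{n+1}+\epsilon_{n+1})^*$. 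Then, via Proposition~\ref{quotientofstars} (initial modules commute with quotients), the already-established isomorphism $H_n(C)^* \cong H_n(C_\epsilon)^*$ becomes the degreewise equality
\[
\dim_k \ker(f_n)^*_p - \dim_k \im(f_{n+1})^*_p \;=\; \dim_k \ker(f_n+\epsilon_n)^*_p - \dim_k \im(f_{n+1}+\epsilon_{n+1})^*_p.
\]
Each term here is finite, being a graded piece of a finitely generated $\grm(R)$-module; the two one-sided inequalities combined with this equality force equality of dimensions termwise, and hence $\im(f_{n+1})^* = \im(f_{n+1}+\epsilon_{n+1})^*$ and $\ker(f_n)^* \cong \ker(f_n+\epsilon_n)^*$. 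The key idea you are missing is this degreewise count via the short exact sequence $0 \to \im^* \to \ker^* \to H_n^* \to 0$, which sidesteps any need for finiteness of $\ker$ or $\im$ themselves.
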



The statements in Theorem \ref{T-EisenbudComplexPerturbation} are shown in \cite{eisenbud1974adic} and \cite[Theorem 2.4]{duarte2021betti}, except for the second claim made in point (ii). We now prove this claim.
\begin{proof}
Applying point (i) to the complex $\begin{tikzcd}
0 \arrow[r] & C_n \arrow[r, "f_n"] & C_{n-1} \arrow[r] & 0
\end{tikzcd}$ we obtain that, for $d_n$ large enough, $\ker(f_n+\epsilon_n)^*$ is isomorphic to a graded subquotient of $\ker(f_n)^*$. Moreover, since $\im(f_{n+1}+\epsilon_{n+1})\equiv \im(f_{n+1})\bmod \m^{d_{n+1}}C_n$, by \cite[Proposition 3.1]{duarte2021betti}, for $d_{n+1}$ large enough we have that $\im(f_{n+1})^*\subseteq \im(f_{n+1}+\epsilon_{n+1})^*$. In particular, $\dim_k(\ker(f_n)^*_p)\ge\dim_k(\ker(f_n+\epsilon_n)^*_p)$ and  $\dim_k(\im(f_{n+1})^*_p)\le\dim_k(\im(f_{n+1}+\epsilon_{n+1})^*_p)$ for every $p\ge 0$. Also, since for $d_n$ and $d_{n+1}$ large enough we know that $H_n(C)^*\cong H_n(C_{\epsilon})^*$, by Proposition \ref{quotientofstars} we have that 
$$\frac{\ker(f_n)^*}{\im(f_{n+1})^*}\cong \left(\frac{\ker(f_n)}{\im(f_{n+1})}\right)^*\cong \left(\frac{\ker(f_n+\epsilon_n)}{\im(f_{n+1}+\epsilon_{n+1})}\right)^*\cong \frac{\ker(f_n+\epsilon_n)^*}{\im(f_{n+1}+\epsilon_{n+1})^*}. $$
Hence
$$\dim_k(\ker(f_n)^*_p)-\dim_k(\im(f_{n+1})^*_p)=\dim_k(\ker(f_n+\epsilon_n)^*_p)-\dim_k(\im(f_{n+1}+\epsilon_{n+1})^*_p)$$
for every $p\ge 0$. This holds if and only if $\dim_k(\ker(f_n)^*_p)=\dim_k(\ker(f_n+\epsilon_n)^*_p)$ and also $\dim_k(\im(f_{n+1})^*_p)=\dim_k(\im(f_{n+1}+\epsilon_{n+1})^*_p)$ for every $p\ge 0$, that is, if and only if $\ker(f_{n})^*\cong \ker(f_{n}+\epsilon_{n})^*$ and $\im(f_{n+1})^*= \im(f_{n+1}+\epsilon_{n+1})^*$.
\end{proof}

We will also need the following:

\begin{proposition}\label{L-imkerstar}
Let $f\colon C\to D$ be a map of finitely generated $R$-modules. 
Set $s=\emph{\text{AR}}(\m,\im(f)\subseteq D)$ and $r=\emph{\text{AR}}(\m,\ker(f)\subseteq C)$. Then for every $N>s+r$ and every map $\epsilon\colon C\to \m^ND$ the following are equivalent:
\begin{itemize}
\item[(i)] $\im(f)^*=\im(f+\epsilon)^*$.
\item[(ii)] $\ker(f)^* \cong \ker(f+\epsilon)^*$.
\item[(iii)] $\ker(f)^*=\ker(f+\epsilon)^*$.
\item[(iv)] $\ker(f)\equiv \ker(f+\epsilon) \bmod \m^{N-s}C$.
\end{itemize}
\end{proposition}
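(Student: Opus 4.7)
The plan is to prove the four equivalences via the cycle (i) $\Rightarrow$ (iv) $\Rightarrow$ (iii) $\Rightarrow$ (ii) $\Rightarrow$ (i), supplemented by a direct proof of (iii) $\Rightarrow$ (iv). Write $g := f+\epsilon$ and $M := N-s$; by hypothesis $M > r$. The main tools will be Artin-Rees applied to $\im(f) \subseteq D$ (with bound $s$), Theorem~\ref{T-EisenbudComplexPerturbation}(i) applied to the two-term complex $0 \to C \to D \to 0$ (which yields $\dim_k \ker(g)^*_p \le \dim_k \ker(f)^*_p$ for all $p$), Proposition~\ref{quotientofstars}, and the inclusion $\im(f)^* \subseteq \im(g)^*$ from Proposition~3.1 of~\cite{duarte2021betti}.

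For (i) $\Rightarrow$ (iv): the equality $\im(f)^* = \im(g)^*$ forces the Artin-Rees number of $\im(g)$ in $D$ to be $s$, and then for $x \in \ker(f)$ the element $g(x) = \epsilon(x)$ lies in $\im(g) \cap \m^N D$, so Artin-Rees produces $y \in \m^{N-s}C$ with $g(y) = g(x)$, giving $x - y \in \ker(g)$; symmetry completes (iv). For (iv) $\Rightarrow$ (iii): I would verify $\ker(f)^*_p \subseteq \ker(g)^*_p$ in each degree $p$. The case $p < M$ is direct from (iv) since $\m^M C \subseteq \m^{p+1}C$; the case $p \ge M$ reduces to degree $r$ by Artin-Rees, using $\ker(f) \cap \m^p C = \m^{p-r}(\ker(f) \cap \m^r C)$ and observing that the error term lies in $\m^{p-r+M}C \subseteq \m^{p+1}C$ because $M > r$. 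Combined with Theorem~\ref{T-EisenbudComplexPerturbation}(i), this forces equality. For the converse (iii) $\Rightarrow$ (iv), I build $y = \sum_{i=0}^{M-1} y_i \in \ker(g)$ iteratively with $y_i \in \ker(g) \cap \m^i C$ so that the remainder $m_i := x - \sum_{j<i} y_j$ stays in $\m^i C$. At each step, $f(m_i) = -\sum_{j<i} f(y_j) \in \m^N D$ (since $g(y_j) = 0$ gives $f(y_j) = -\epsilon(y_j) \in \m^N D$), so Artin-Rees for $\im(f)$ produces $c_i \in \m^{N-s}C \subseteq \m^{i+1}C$ with $f(c_i) = f(m_i)$; then $m_i - c_i \in \ker(f) \cap \m^i C$, and (iii) supplies $y_i \in \ker(g) \cap \m^i C$ with $m_{i+1} := m_i - y_i \in \m^{i+1} C$.

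The implication (iii) $\Rightarrow$ (ii) is immediate. The main obstacle is (ii) $\Rightarrow$ (i): by Proposition~\ref{quotientofstars} applied to the short exact sequences $0 \to \ker(f) \to C \to \im(f) \to 0$ (and its analogue for $g$), the isomorphism $\ker(f)^* \cong \ker(g)^*$ translates into equality of Hilbert functions of $\gr_\m(\im(f))$ and $\gr_\m(\im(g))$ with respect to their quotient filtrations from $C$. Combining this with $\im(f)^* \subseteq \im(g)^*$ and using that (ii) forces the Artin-Rees numbers of the kernels to coincide, one carefully compares the quotient filtration $\{\m^i \im(f)\}$ with the subspace filtration $\{\im(f) \cap \m^i D\}$ on each image (whose discrepancy is bounded by $s$ via Artin-Rees) to deduce $\dim_k \im(f)^*_p = \dim_k \im(g)^*_p$ for all $p$; dimension equality plus the inclusion then yields (i). The hardest piece will be this last step, where controlling the shift between the two stable $\m$-filtrations on each image module, and matching these shifts across $f$ and $g$, is delicate.
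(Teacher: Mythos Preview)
Your cycle (i) $\Rightarrow$ (iv) $\Rightarrow$ (iii) $\Rightarrow$ (ii) $\Rightarrow$ (i) is the same as the paper's, and your arguments for (i) $\Rightarrow$ (iv) and (iv) $\Rightarrow$ (iii) coincide with the paper's (the paper simply quotes \cite[Proposition~3.1]{duarte2021betti} for the inclusion $\ker(f)^*\subseteq\ker(f+\epsilon)^*$ that you establish by hand). Your direct proof of (iii) $\Rightarrow$ (iv) is correct and is a pleasant extra, though unnecessary for the cycle.

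The genuine gap is in (ii) $\Rightarrow$ (i). From Proposition~\ref{quotientofstars} you correctly extract that $\mu(\m^n\im(f))=\mu(\m^n\im(f+\epsilon))$ for all $n$, and you know $\im(f)^*\subseteq\im(f+\epsilon)^*$. But to pass from equal $\m$-adic Hilbert functions of $\im(f)$ and $\im(f+\epsilon)$ to equal \emph{subspace} Hilbert functions inside $D$ (which is what $\im(-)^*$ records), you need to control the subspace filtration $\{\im(f+\epsilon)\cap\m^nD\}$ against the $\m$-adic filtration $\{\m^n\im(f+\epsilon)\}$. You only have the Artin--Rees bound $s$ for $\im(f)$; a priori the Artin--Rees number of $\im(f+\epsilon)$ in $D$ could be much larger than $s$ (indeed, that is what (i) asserts and what you are trying to prove), so the ``shift'' for $f+\epsilon$ is not yet bounded. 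Your sketch names this as the delicate point but supplies no mechanism to resolve it.

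The paper closes this gap via Lemma~\ref{L-NumberGeneratorsInequality}, proved just before the proposition. Its part~(iii) says precisely: if $M\equiv M_\epsilon\bmod\m^NF$ with $N>\text{AR}(\m,M\subseteq F)$ and $\mu(\m^nM)=\mu(\m^nM_\epsilon)$ for all $n\ge 0$, then $M^*=M_\epsilon^*$. The heart of its proof is establishing the weak Artin--Rees inequality $\m^nF\cap M_\epsilon\subseteq \m^{n-s}M_\epsilon$ for the \emph{perturbed} module, deduced from the equality of all $\mu(\m^n-)$ by a minimality-of-generators argument. This is exactly the missing control over the shift for $\im(f+\epsilon)$; once you have it, the rest of your outline goes through. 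So your plan is right in spirit, but you should either invoke Lemma~\ref{L-NumberGeneratorsInequality}(iii) or reproduce its argument to complete (ii) $\Rightarrow$ (i).
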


To prove it, we will require a lemma. We will denote by $\mu(M)$ the minimal number of generators of an $R$-module $M$. 

\begin{lemma}\label{L-NumberGeneratorsInequality}
Let $(R,\m)$ be a Noetherian local ring and let $M\subseteq F$ be finitely generated $R$-modules. For $N>\emph{\text{AR}}(\m,M\subseteq F)$ the following holds: if $\Me$ is a submodule of $F$ such that $M\equiv \Me \bmod \m^NF$, then 
\begin{itemize}
\item[(i)]$\mu(\m^nM)\le\mu(\m^n\Me)$ for every $n\ge 0$.
\item[(ii)] If $\mu(\m^nM)=\mu(\m^n\Me)$ for some $n\ge 0$ and $\{f_1,\ldots,f_r\}$ is a minimal generating set of $\m^nM$, then there exist $\epsilon_1,\ldots,\epsilon_r\in\m^{N+n}F$ such that $\{f_1+\epsilon_1,\ldots,f_r+\epsilon_r\}$ is a minimal generating set of $\m^n\Me$.
\item[(iii)] If $\mu(\m^nM)=\mu(\m^n\Me)$ for all $n\ge 0$, then $M^*=\Me^*$.
\end{itemize}
\end{lemma}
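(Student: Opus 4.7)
The plan is to analyze the interplay between the $\m$-adic filtration $\{\m^n M\}$ and the subspace filtration $\{M \cap \m^n F\}$ on $M$ (and likewise on $\Me$), exploiting the congruence $M + \m^N F = \Me + \m^N F$ together with the assumption $N > s := \text{AR}(\m, M \subseteq F)$, which via Artin-Rees gives $\m^{n+N} F \cap M \subseteq \m^{n+1} M$.

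For part (i), I construct, for each $n \ge 0$, a natural $k$-linear map
\[\phi_n \colon \m^n M / \m^{n+1} M \longrightarrow \frac{\m^n \Me + \m^{n+N} F}{\m^{n+1}\Me + \m^{n+N} F}, \quad f + \m^{n+1}M \mapsto f + \m^{n+1}\Me + \m^{n+N} F,\]
which is well-defined because $\m^{n+1} M \subseteq \m^{n+1}\Me + \m^{n+N} F$, surjective because $\m^n M + \m^{n+N} F = \m^n \Me + \m^{n+N} F$ (obtained from $M + \m^N F = \Me + \m^N F$ by multiplying by $\m^n$), and injective by the Artin-Rees bound $\m^{n+N}F \cap M \subseteq \m^{n+1}M$. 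Composing with the natural surjection from $\m^n \Me/\m^{n+1}\Me$ onto the codomain of $\phi_n$ yields $\mu(\m^n M) \le \mu(\m^n \Me)$.

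For part (ii), the equality hypothesis upgrades the preceding surjection to an isomorphism. Given a minimal generating set $\{f_1, \dots, f_r\}$ of $\m^n M$, each $f_i$ lies in $\m^n \Me + \m^{n+N} F$, so I write $f_i = g_i - \epsilon_i$ with $g_i \in \m^n \Me$ and $\epsilon_i \in \m^{n+N}F$. Tracking through the two isomorphisms shows that the classes $\{g_i\}$ form a $k$-basis of $\m^n\Me/\m^{n+1}\Me$, whence Nakayama's lemma gives that $\{f_i + \epsilon_i\} = \{g_i\}$ is a minimal generating set of $\m^n\Me$.

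For part (iii), the goal is to prove $M^*_m = \Me^*_m$ inside $\text{gr}_\m(F)_m$ for every $m$. For $m < N$, both inclusions are automatic: the congruence gives $M + \m^{m+1}F = \Me + \m^{m+1}F$, and the modular law then yields $M \cap \m^m F + \m^{m+1}F = \Me \cap \m^m F + \m^{m+1}F$. For $m \ge N$, the inclusion $M^*_m \subseteq \Me^*_m$ follows by combining Artin-Rees for $M \subseteq F$ (so $M \cap \m^m F \subseteq \m^{m-s} M$) with $\m^{m-s} M \subseteq \m^{m-s}\Me + \m^{m-s+N} F \subseteq \Me + \m^{m+1}F$, where the last step uses $N > s$. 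The main obstacle is the reverse inclusion $\Me^*_m \subseteq M^*_m$ for $m \ge N$, since the asymmetric condition $N > s$ does not a priori provide an Artin-Rees bound for $\Me$. To overcome this, I will invoke the $\mu$ hypothesis together with part (ii): the induced isomorphism $\m^n\Me/\m^{n+1}\Me \cong (\m^n\Me + \m^{n+N}F)/(\m^{n+1}\Me + \m^{n+N}F)$ holds for every $n \ge 0$, equivalently $\m^n \Me \cap \m^{n+N} F \subseteq \m^{n+1}\Me$; iterating this inclusion yields the weak Artin-Rees bound $\Me \cap \m^m F \subseteq \m^{m-N+1}\Me$ for $m \ge N$. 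Combining with $\Me \subseteq M + \m^N F$ pushes $\m^{m-N+1}\Me$ into $\m^{m-N+1}M + \m^{m+1}F$, so every element of $\Me \cap \m^m F$ becomes congruent modulo $\m^{m+1}F$ to an element of $M \cap \m^m F$, completing the reverse inclusion and hence part (iii).
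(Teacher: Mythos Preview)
Your proof is correct and follows essentially the same strategy as the paper: use Artin--Rees for $M\subseteq F$ to control $\m^{n+N}F\cap M$, establish a weak Artin--Rees bound for $\Me$ from the $\mu$-equality hypothesis, and combine these to get both inclusions $M^*\subseteq \Me^*$ and $\Me^*\subseteq M^*$. Your packaging via the $k$-linear map $\phi_n$ and the direct iteration $\m^n\Me\cap\m^{n+N}F\subseteq\m^{n+1}\Me\Rightarrow \Me\cap\m^mF\subseteq\m^{m-N+1}\Me$ is a bit more streamlined than the paper's element-chasing contradiction argument for the weak Artin--Rees step (which yields the slightly sharper bound $\Me\cap\m^nF\subseteq\m^{n-s}\Me$), but the underlying ideas coincide.
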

\begin{proof}
Let $s=\text{AR}(\m,M\subseteq F)$ and let $\{f_1,\ldots,f_r\}$ be a minimal generating set of $\m^nM$. As $\m^nM\equiv \m^n\Me \bmod \m^{N+n}F$, there are $\epsilon_1,\ldots,\epsilon_r\in\m^{N+n}F$ such that $f_i+\epsilon_i\in \m^n\Me$ for $i=1,\ldots,r$. To prove (i), as well as (ii), it is enough to show that $\{f_1+\epsilon_1,\ldots,f_r+\epsilon_r\}$ is part of a minimal generating set of $\m^n\Me$. By Nakayama, this is equivalent to proving that the image of $\{f_1+\epsilon_1,\ldots,f_r+\epsilon_r\}$ in $\m^n\Me/\m^{n+1} \Me$ is an $R/\m$-linearly independent set; so supposing $a_1,\ldots,a_r\in R$ are such that $\sum_{i=1}^{r}{a_i(f_i+\epsilon_i)}\in\m^{n+1} \Me$, we need to show that $a_1,\ldots,a_r\in\m$. Indeed, this implies that
\begin{equation*}
\begin{split}
\sum_{i=1}^{r}{a_if_i}\in \m^nM\cap\left(\m^{n+1} \Me+ \left(\sum_{i=1}^{r}{a_i\epsilon_i}\right)\right)& \subseteq \m^nM\cap(\m^{n+1}\Me+\m^{N+n}F)
\\ & = \m^nM\cap(\m(\m^n\Me+\m^{N+n-1}F))
\\ & = \m^nM\cap(\m(\m^nM+\m^{N+n-1}F))
\\ & = \m^{n+1}M+\m^{N+n}F\cap \m^nM
\\ & \subseteq \m^{n+1}M+ \m^{N+n}\cap M
\\ & \subseteq \m^{n+1}M+ \m^{N+n-s} M
\\ & = \m^{n+1}M
\end{split}
\end{equation*}
Because $\{f_1,\ldots,f_r\}$ is a minimal system of generators of $M$, we conclude that $a_1,\ldots,a_r\in\m$.

We now prove (iii). 
First of all we will show that 
\begin{equation}\label{weakAR}\m^nF\cap \Me\subseteq \m^{n-s}\Me\quad\text{for all }n\ge s.
\end{equation}
Let $n\ge s$ and suppose $x$ is a nonzero element in $\m^nF\cap \Me$. We will show that $x\in\m^{n-s}\Me$. Let $l$ be the largest natural number for which $a\in\m^l\Me$. By means of contradiction, suppose $l<n-s$. Let $\{f_1,\ldots,f_r\}$ be a minimal generating set of $\m^lM$. Then by (ii) there are $\epsilon_1,\ldots,\epsilon_r\in\m^{N+l}F$ such that $\{f_1+\epsilon_1,\ldots,f_r+\epsilon_r\}$ is a minimal generating set of $\m^l\Me$. Writing $x=a_1(f_1+\epsilon_1)+\cdots + a_r(f_r+\epsilon_r)$ we observe that
\begin{equation*}
\begin{split}
\sum_{i=1}^{r}{a_if_i}  = x-\sum_{i=1}^{r}{a_i\epsilon_i}  & \in (\m^nF+\m^{N+l}F)\cap M
\\ & \subseteq \m^{l+s+1}F\cap M
\\ & \subseteq \m^{l+1}M.
\end{split} 
\end{equation*}
Hence all $a_1,\ldots,a_r$ must belong to $\m$. But this implies that $a\in\m^{l+1}\Me$, which contradicts our choice of $l$. We have thus shown (\ref{weakAR}). We now prove that $M^*=\Me^*$. By \cite[Proposition 3.1]{duarte2021betti} we have $M^*\subseteq \Me^*$. It remains to prove the reverse inclusion. Since 
$$M^*=\bigoplus_{i=0}^{\infty}{\frac{M\cap \m^iF+\m^{i+1}F}{\m^{i+1}F}}\quad\text{and}\quad \Me^*=\bigoplus_{i=0}^{\infty}{\frac{\Me\cap \m^iF+\m^{i+1}F}{\m^{i+1}F}},$$
it is enough to show that $$\Me\cap \m^iF\subseteq M\cap \m^iF+\m^{i+1}F=(M+\m^{i+1}F)\cap \m^iF \quad\text{for all } i\ge 0.$$ Indeed, this clear if $i<N$, while if $i\ge N$ we have by (\ref{weakAR}) that $\Me\cap \m^iF\subseteq \m^{i-s}\Me\subseteq \m^{i-s}(M+\m^NF)\subseteq M+\m^{N+i-s}F \subseteq M+\m^{i+1}F$.
\end{proof}

\begin{proof}[Proof of Proposition \ref{L-imkerstar}]
(i) $\Rightarrow$ (iv): For every $x\in \ker(f+\epsilon)$ we have that  
$f(x)=-\epsilon(x) \in $ $ \m^{N}D\cap\im(f) 
\subseteq \m^{N-s}\im(f),$
which implies that there exists $x'\in \m^{N-s}C$ such that $f(x)=f(x')$, that is, $x-x'\in \ker(f)$. This shows that $\ker(f+\epsilon)\subseteq \ker(f)+\m^{N-s}C$.        
Since the hypothesis $\im(f)^*=\im(f+\epsilon)^*$ implies that $\text{AR}(\m,\im(f+\epsilon)\subseteq D)=s$, the proof that that $\ker(f)\subseteq \ker(f+\epsilon)+\m^{N-s}C$ follows by the same argument.

(iv) $\Rightarrow$ (iii): Since $N-s>r$, by \cite[Proposition 3.1]{duarte2021betti} we have that $\ker(f)^*\subseteq \ker(f+\epsilon)^*$. On the other hand, by Theorem \ref{T-EisenbudComplexPerturbation} applied to the complex $\begin{tikzcd}
0 \arrow[r] & C \arrow[r, "f"] & D \arrow[r] & 0
\end{tikzcd}$, $\ker(f+\epsilon)^*$ is isomorphic to a graded subquotient of $\ker(f)^*$. Hence we must have the equality $\ker(f)^*=\ker(f+\epsilon)^*$.

(iii) clearly implies (ii). We now show (ii) $\Rightarrow$ (i): Since
$$\grm(\im(f))\cong\grm\left(\frac{C}{\ker(f)}\right)\cong \frac{\grm(C)}{\ker(f)^*},$$
for every $n\ge 0$ we have that
\begin{align*}
\mu(\m^n\im(f))&=\dim_k(\grm(\im(f))_n)=\dim_k\left(\frac{\grm(C)}{\ker(f)^*}\right)_n =\dim_k\left(\frac{\grm(C)_n}{\ker(f)^*_n}\right) \\
&=\dim_k(\grm(C)_n)-\dim_k(\ker(f)^*_n)
\end{align*}
and similarly with $f$ replaced by $f+\epsilon$. Since $\ker(f)^*\cong \ker(f+\epsilon)^*$ by hypothesis, we have $\dim_k(\ker(f)^*_n)=\dim_k(\ker(f+\epsilon)^*_n)$ for all $n\ge 0$. We thus conclude that $\mu(\m^n\im(f))=\mu(\m^n\im(f+\epsilon))$ for all $n\ge 0$. As $\im(f)\equiv \im(f+\epsilon)\bmod\m^ND$, it now follows by Lemma \ref{L-NumberGeneratorsInequality} that $\im(f)^*=\im(f+\epsilon)^*$.
\end{proof}

\section{Local cohomology under small perturbations}\label{SectionBetti}




We start by setting up the notation that we will be using for the rest of the article. 
Let $I$ be a fixed ideal of $R$ and let $J$ be an ideal such that $J \equiv I \bmod \m^N$ and such that $I$ and $J$ have the same Hilbert function. 

\begin{remark}\label{Remark-filterregular}
We recall that, by \cite{ma2019filter}, if $I=(f_1,\ldots,f_r)$, where $f_1,\ldots,f_r\in R$ is a filter-regular sequence in $R$, then, for $N\gg 0$, $I$ and $J$ have the same Hilbert function for every ideal $J$ of the form $(f_1+\epsilon_1,\ldots,f_r+\epsilon_r)$, where $\epsilon_1,\ldots,\epsilon_r$ are any elements in $\m^N$.
\end{remark}

Let $\widehat {(-)}$ denote $\m$-adic completion. From $I+\m^N=J+\m^N$ we get $(I+\m^N)\widehat R=(J+\m^N)\widehat R$, that is, $\widehat {I}\equiv \widehat {J} \bmod \widehat {\m}^N$. 
Also, by \cite[Proposition 3.2]{duarte2021betti} and \cite[Theorem 7.1]{eisenbud2013commutative} we have that $\gr_{\widehat{\m}}(\widehat{R}/\widehat{I})=\gr_{\m}({R/I})\cong \gr_{\m}({R/J})=\gr_{\widehat{\m}}(\widehat{R}/\widehat{J})$, hence $\widehat{R}/\widehat{I}$ and $\widehat{R}/\widehat{J}$ have the same Hilbert function.
Since $H^i_{\m}(R/I)\cong H^i_{\widehat{\m}}(\widehat{R}/\widehat{I})$ and $H^i_{\m}(R/J)\cong H^i_{\widehat{\m}}(\widehat{R}/\widehat{J})$ for all $i$, in the proofs of all the results in this section we will always be able to assume that $R$ is complete. 

With this new assumption, by the Cohen structure theorem, $R$ is a quotient ring of a complete regular local ring $S$. Let $\n$ be the maximal ideal of $S$ and $L$ be the ideal of $S$ such that $R=S/L$. Let also $I_0,J_0\supseteq L$ be the ideals of $S$ such that $I=I_0/L$ and $J=J_0/L$.
Since $I+\m^N=J+\m^N$, that is, $\frac{I_0}{L}+\frac{\n^N+L}{L}=\frac{J_0}{L}+\frac{\n^N+L}{L}$, it follows that $I_0\equiv J_0\bmod \n^N$. Also, by hypothesis $S/I_0=R/I$ and $S/J_0=R/J$ have the same Hilbert function.

Let $d=\dim(S)$. 
As $S$ is regular, there is a finite free resolution
 $$\begin{tikzcd}
 F_{\sbullet}\colon  0 \arrow[r]  & F_d \arrow[r, "f_d"] & F_{d-1} \arrow[r, "f_{d-1}"] & \cdots \arrow[r, "f_2"] & F_1 \arrow[r, "f_1"] & F_0 
\end{tikzcd}$$
 of $R/I$ as an $S$-module. Here, $F_i=0$ and $f_i=0$ if $i>\text{pd}_S(R/I)$. By \cite[Theorem 3.6]{duarte2021betti}, given any $N_0\in\N$, for $N\gg 0$ there exists a minimal free resolution of $R/J$ of the form
$$\begin{tikzcd}[row sep=large,column sep = large]
 F^{\epsilon}_{\sbullet}\colon  0 \arrow[r]  & F_d \arrow[r, "f_d+\epsilon_{d}"] & F_{d-1} \arrow[r, "f_{d-1}+\epsilon_{d-1}"] & \cdots \arrow[r, "f_2+\epsilon_{2}"] & F_1 \arrow[r, "f_1+\epsilon_{1}"] & F_0 
\end{tikzcd}$$
where $\epsilon_i(F_i)\subseteq\n^{N_0}F_{i-1}$ for every $i=1,\ldots,d$.

We will use 
$(-)^{\vee}$ to denote the functor $\text{Hom}_S(-,E_S(k))$, where $E_S(k)$ is the injective hull of the residue field $k=S/\n$ as an $S$-module.
By local duality (\cite[Appendix A4.2]{eisenbud2013commutative}) we have that 
\begin{equation}\label{dualityEq}
H^i_{\m}(R/I)^{\vee}\cong \text{Ext}_S^{d-i}(R/I,S)\cong H^{d-i}(\text{Hom}_S(F_{\sbullet},S))\quad\text{for all } i
\end{equation} 
and
\begin{equation}\label{dualityEq}
H^i_{\m}(R/J)^{\vee}\cong \text{Ext}_S^{d-i}(R/J,S)\cong H^{d-i}(\text{Hom}_S(F^{\epsilon}_{\sbullet},S))\quad\text{for all } i.
\end{equation} 

Notice that $\text{Hom}_S(F^{\epsilon}_{\sbullet},S)$ is an $\n$-adic approximation of the cochain complex $\text{Hom}_S(F_{\sbullet},S)$ of order $(\ldots,N_0,N_0, N_0,\ldots)$.


\begin{theorem}\label{T-LClength}
Let $(R,\m)$ be a Noetherian local ring and let $I$ be an ideal of $R$. There exists $N>0$ with the following property: for every ideal $J$ such that $J \equiv I \bmod \m^N$ and such that $I$ and $J$ have the same Hilbert function, one has that 
\begin{itemize}
\item[(i)] if $H^i_{\m}(R/I)$ is finitely generated, then $H^i_{\m}(R/J)$ is also finitely generated and $$\ell(H^i_{\m}(R/I))\ge \ell(H^i_{\m}(R/J)).$$
\item[(ii)] if $H^i_{\m}(R/I)$ and $H^{i-1}_{\m}(R/I)$ are both finitely generated, then $$\ell(H^i_{\m}(R/I))=\ell(H^i_{\m}(R/J)).$$
\end{itemize} 
\end{theorem}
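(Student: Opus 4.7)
The plan is to combine the reduction to the completion already set up in the section preamble with Eisenbud's comparison theorem (Theorem \ref{T-EisenbudComplexPerturbation}) applied to the $S$-dual of a free resolution, and then to translate statements about initial modules back into statements about lengths. After passing to $\widehat{R}$, write $R=S/L$ with $S$ complete regular local of dimension $d$ with maximal ideal $\n$, and fix a finite free resolution $F_\sbullet$ of $R/I$ over $S$. By \cite[Theorem 3.6]{duarte2021betti}, provided $N$ is large enough (the bound will depend only on $I$), $R/J$ admits a minimal free resolution $F_\sbullet^\epsilon$ which is an $\n$-adic approximation of $F_\sbullet$ of order $(N_0,\ldots,N_0)$, with $N_0$ as large as we wish. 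Dualizing into $S$, $C^\epsilon:=\mathrm{Hom}_S(F_\sbullet^\epsilon,S)$ is an $\n$-adic approximation of $C:=\mathrm{Hom}_S(F_\sbullet,S)$ of the same order, and local duality identifies $H^{d-i}(C)\cong H^i_\m(R/I)^\vee$ and $H^{d-i}(C^\epsilon)\cong H^i_\m(R/J)^\vee$.

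Fix $N_0$ larger than the integer produced by Theorem \ref{T-EisenbudComplexPerturbation} applied to $C$; since only finitely many cohomological degrees are relevant (namely $0\le i\le d$), a single $N$ will work uniformly. For (i), the assumption that $H^i_\m(R/I)$ is finitely generated forces it to have finite length, as it is $\m$-torsion; hence $H^{d-i}(C)\cong H^i_\m(R/I)^\vee$ has finite length and is therefore annihilated by a power of $\n$. Part (i) of Theorem \ref{T-EisenbudComplexPerturbation} then gives that $H^{d-i}(C^\epsilon)_p^*$ is a graded subquotient of $H^{d-i}(C)_p^*$ for every $p$, so $\ell(H^{d-i}(C^\epsilon)^*)\le \ell(H^{d-i}(C)^*)$. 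For (ii), assuming in addition that $H^{i-1}_\m(R/I)$ is finitely generated yields that $H^{d-i+1}(C)$ is also annihilated by a power of $\n$, and then part (ii) of the same theorem upgrades the inequality to an isomorphism $H^{d-i}(C)^*\cong H^{d-i}(C^\epsilon)^*$, whence the two initial modules have the same length.

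The remaining step is to extract from these comparisons of initial modules the desired statements for $H^i_\m(R/J)$. The crucial observation is that if $M$ is a submodule of a finitely generated $S$-module $G$ endowed with the $\n$-adic filtration, then $\ell(M)=\ell(M^*)$ whenever either side is finite: if $M$ has finite length then the descending chain $\{M\cap \n^p G\}$ consists of submodules of $M$, so it stabilizes, and its intersection is zero by Krull's theorem, forcing $M\cap \n^p G=0$ for $p\gg 0$, so the filtration on $M$ is finite and $\ell(M)=\ell(M^*)$; conversely, if $M^*$ has finite length, then $M\cap \n^p G\subseteq \n^{p+1}G$ for all large $p$, and iterating plus Krull's theorem inside $G$ again yields $M\cap \n^p G=0$ eventually, so $M$ embeds in $G/\n^p G$ and is of finite length equal to $\ell(M^*)$. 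Applying this to $M=H^{d-i}(C)$ and $H^{d-i}(C^\epsilon)$ inside the corresponding quotients of $\mathrm{Hom}_S(F_{d-i},S)$, part (i) shows $H^{d-i}(C^\epsilon)$ has finite length, and Matlis duality (which preserves length over $S$ complete) converts the inequality into $\ell(H^i_\m(R/J))\le \ell(H^i_\m(R/I))$; similarly, the isomorphism in part (ii) becomes the equality $\ell(H^i_\m(R/J))=\ell(H^i_\m(R/I))$. The only real technical obstacle is the Krull intersection bookkeeping needed to pass between finiteness of $M$ and finiteness of $M^*$; everything else is an application of Theorem \ref{T-EisenbudComplexPerturbation} and local duality.
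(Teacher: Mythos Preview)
Your proof is correct and follows essentially the same route as the paper: reduce to the completion, dualize a free resolution over $S$, apply Theorem~\ref{T-EisenbudComplexPerturbation} to $\mathrm{Hom}_S(F_\sbullet,S)$ and its perturbation, and use local/Matlis duality to pass back to lengths of local cohomology. The only difference is cosmetic: where the paper writes $\ell(M)=\sum_{p\ge 0}\dim_k(M^*_p)$ in one line, you unpack this equality via the Krull intersection argument, which is fine but not a genuinely different idea.
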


\begin{proof}
Let $M= H^{d-i}(\text{Hom}_S(F_{\sbullet},S))$ and $K= H^{d-i}(\text{Hom}_S(F^{\epsilon}_{\sbullet},S))$. 

By point (i) of Theorem \ref{T-EisenbudComplexPerturbation} we have that, given that $N_0\gg 0$, the initial module $K^*$ of $K$ is a graded subquotient of the initial module $M^*$ of $M$. It follows that 
$\ell(M)=\sum_{p\ge0}{\dim_k(M^*_p)} \ge \sum_{p\ge0}{\dim_k(K^*_p)}=\ell(K).$
Assuming $H_{\m}^i(R/I)$ is finitely generated, we then have that $H_{\m}^i(R/I)^{\vee}\cong M$ has finite length and $\ell(H_{\m}^i(R/I))=\ell(H_{\m}^i(R/I)^{\vee})=\ell(M)$. Since $\ell(M)\ge \ell(K)$, it follows that $K$ has finite length and $\ell(K)=\ell(K^{\vee})=\ell(H_{\m}^i(R/J))$. This shows (i).

If in adition we assume that $H^{i-1}_{\m}(R/I)$ is finitely generated, we then have that $H^{i-1}_{\m}(R/I)^{\vee}\\ \cong H^{d-i+1}(\text{Hom}_S(F_{\sbullet},S))$ has finite length.  This means that there is a power of $\n$ that annihilates both $H^{d-i}(\text{Hom}_S(F_{\sbullet},S))$ and $H^{d-i+1}(\text{Hom}_S(F_{\sbullet},S))$. By point (ii) of Theorem \ref{T-EisenbudComplexPerturbation} we obtain that, given $N_0\gg 0$, $K^*\cong M^*$. We conclude that 
$\ell(H_{\m}^i(R/I))=\ell(M)=\sum_{p\ge0}{\dim_k\left(M^*_p\right)} =\sum_{p\ge0}{\dim_k\left(K^*_p\right)}=\ell(K)=\ell(H_{\m}^i(R/J)).$
\end{proof}

Given an $R$-module $M$, for each $i$ we  define $$\ell_{\m}^i(M)=\inf\{n\in\mathbb N \colon \m^nH_{\m}^i(M)=0\}.$$

Next we show that the assumptions of Theorem \ref{T-LClength} (ii) are also enough for us to able to compare $\text{Ann}(H_{\m}^i(R/J))$ with $\text{Ann}(H_{\m}^i(R/I))$. 

\begin{theorem}\label{PowerM}
Let $(R,\m)$ be a Noetherian local ring and $I$ be an ideal of $R$ for which $H_{\m}^i(R/I)$ and $H_{\m}^{i-1}(R/I)$ are finitely generated. There exists $N>0$ with the following property: for every ideal $J$ such that $J \equiv I \bmod \m^N$ and such that $I$ and $J$ have the same Hilbert function, we have 
$$\emph{\text{Ann}}(H_{\m}^i(R/J)) = \emph{\text{Ann}}(H_{\m}^i(R/I)).$$ 
\end{theorem}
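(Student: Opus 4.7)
Following the strategy of the proof of Theorem \ref{T-LClength}, the plan is to reduce to the complete case, translate the annihilator statement through local duality to a statement about Matlis duals of $H^i_\m(R/I)$ and $H^i_\m(R/J)$, and then exploit the perturbation machinery developed in Section \ref{preliminaries}. As in the setup at the start of Section \ref{SectionBetti}, complete $R$, write $R=S/L$ with $(S,\n)$ a complete regular local ring of dimension $d$, and consider the free resolution $F_{\sbullet}$ of $R/I$ over $S$ together with its perturbation $F^{\epsilon}_{\sbullet}$ resolving $R/J$. Setting $M=H^{d-i}(\text{Hom}_S(F_{\sbullet},S))$ and $K=H^{d-i}(\text{Hom}_S(F^{\epsilon}_{\sbullet},S))$, local duality gives $M\cong H^i_{\m}(R/I)^{\vee}$ and $K\cong H^i_{\m}(R/J)^{\vee}$. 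Since Matlis duality preserves annihilators on finite length modules and $L$ annihilates both $M$ and $K$, it is enough to prove $\text{Ann}_S(M)=\text{Ann}_S(K)$.

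Write $g_j=\text{Hom}_S(f_j,S)$ and $\epsilon^*_j=\text{Hom}_S(\epsilon_j,S)$, so that $\text{Hom}_S(F^{\epsilon}_{\sbullet},S)$ is an $\n$-adic approximation of $\text{Hom}_S(F_{\sbullet},S)$ of order $N_0$. Under the hypotheses, both $M$ and $H^{d-i+1}(\text{Hom}_S(F_{\sbullet},S))$ are Matlis duals of finitely generated $\m$-torsion modules, hence have finite length and are annihilated by some power of $\n$. By Theorem \ref{T-EisenbudComplexPerturbation}(ii), taking $N_0$ sufficiently large yields $\ker(g_{d-i+1})^*\cong\ker(g_{d-i+1}+\epsilon^*_{d-i+1})^*$ and $\im(g_{d-i})^*=\im(g_{d-i}+\epsilon^*_{d-i})^*$. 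By Proposition \ref{L-imkerstar}(iv), the first isomorphism upgrades to
\[
\ker(g_{d-i+1})\equiv\ker(g_{d-i+1}+\epsilon^*_{d-i+1}) \bmod \n^{N_0-\sigma}F^*_{d-i},
\]
for a constant $\sigma$ depending only on $I$, while $\im(g_{d-i})\equiv\im(g_{d-i}+\epsilon^*_{d-i}) \bmod \n^{N_0}F^*_{d-i}$ holds directly from the fact that $\epsilon^*_{d-i}$ has image contained in $\n^{N_0}F^*_{d-i}$.

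The main step is to show $\text{Ann}_S(M)\subseteq\text{Ann}_S(K)$. Given $a\in\text{Ann}_S(M)$ and $x\in\ker(g_{d-i+1}+\epsilon^*_{d-i+1})$, decompose $x=x_0+\delta$ with $x_0\in\ker(g_{d-i+1})$ and $\delta\in\n^{N_0-\sigma}F^*_{d-i}$; then
\[
ax = ax_0 + a\delta \in \im(g_{d-i})+\n^{N_0-\sigma}F^*_{d-i} \subseteq \im(g_{d-i}+\epsilon^*_{d-i})+\n^{N_0-\sigma}F^*_{d-i}.
\]
Since also $ax\in\ker(g_{d-i+1}+\epsilon^*_{d-i+1})$, the modular law (using $\im\subseteq\ker$) combined with Artin-Rees applied to $\ker(g_{d-i+1}+\epsilon^*_{d-i+1})\subseteq F^*_{d-i}$ gives $ax\in\im(g_{d-i}+\epsilon^*_{d-i})+\n^{N_0-\sigma-\tau}\ker(g_{d-i+1}+\epsilon^*_{d-i+1})$, where $\tau$ is the relevant Artin-Rees number. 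Crucially, $\tau$ is independent of $J$ by the remark in Section \ref{preliminaries} that Artin-Rees numbers are determined by the initial module, which here is isomorphic to $\ker(g_{d-i+1})^*$. Passing to $K$ gives $\overline{ax}\in\n^{N_0-\sigma-\tau}K$; since $\ell(K)=\ell(M)$ by Theorem \ref{T-LClength}(ii) we have $\n^{\ell(M)}K=0$, so choosing $N_0\geq \ell(M)+\sigma+\tau$ forces $aK=0$. The reverse inclusion $\text{Ann}_S(K)\subseteq\text{Ann}_S(M)$ follows by the symmetric argument with the roles of $M$ and $K$ exchanged.

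The main obstacle is ensuring that the chosen $N$ depends only on $I$. This amounts to verifying that the constants $\sigma,\tau$ and the length $\ell(M)$ in the estimate are all intrinsic to $I$ and not to $J$; this uniform control is delivered precisely by Theorem \ref{T-LClength}(ii), Proposition \ref{L-imkerstar}, and the Artin-Rees property recorded in Section \ref{preliminaries}.
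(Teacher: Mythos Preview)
Your proof is correct and follows essentially the same route as the paper: reduce to the complete case, apply local duality, invoke Theorem~\ref{T-EisenbudComplexPerturbation}(ii) to get $\ker(g_{d-i+1})^*\cong\ker(g_{d-i+1}+\epsilon^*_{d-i+1})^*$, upgrade via Proposition~\ref{L-imkerstar} to a congruence of kernels modulo $\n^{N_0-\sigma}$, and then push an annihilating element through using Artin--Rees together with the fact that the relevant Artin--Rees constants depend only on $I$ (via the initial-module remark in Section~\ref{preliminaries}). The only substantive difference is in the last step: the paper first proves separately that $l':=\ell^i_\m(R/J)\le l:=\ell^i_\m(R/I)$ and then uses $\n^{l}\ker(h+\epsilon)\subseteq\im(d+\delta)$ to conclude, whereas you appeal instead to $\ell(K)=\ell(M)$ from Theorem~\ref{T-LClength}(ii) and the elementary bound $\n^{\ell(K)}K=0$. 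Your shortcut is legitimate and slightly more economical, since the paper's $l'\le l$ argument is really a special case of the same annihilator computation; on the other hand, the paper's version yields the sharper intermediate statement $\ell^i_\m(R/J)=\ell^i_\m(R/I)$ along the way.
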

\begin{proof}
Let $(-)'$ denote the functor $\text{Hom}_S(-,S). $ For simplicity of notation, rewrite the piece 
$$\begin{tikzcd}
 F_{d-i-1}' \arrow[r, "f_{d-i}'"] & F_{d-i}' \arrow[r, "f_{d-i+1}'"] & F_{d-i+1}' \arrow[r, "f_{d-i+2}'"] & F_{d-i+2}' \arrow[r, "f_{d-i+3}'"] & F_{d-i+3}' 
\end{tikzcd} $$
of $\text{Hom}_S(F_{\sbullet},S)$ and the piece
$$ \begin{tikzcd}[row sep=large,column sep = large]
  F_{d-i-1}' \arrow[r, "f_{d-i}'+\epsilon_{d-i}'"] & F_{d-i}' \arrow[r, "f_{d-i+1}'+\epsilon_{d-i+1}'"] & F_{d-i+1}' \arrow[r, "f_{d-i+2}'+\epsilon_{d-i+2}'"] & F_{d-i+2}' \arrow[r, "f_{d-i+3}'+\epsilon_{d-i+3}'"] & F_{d-i+3}'
\end{tikzcd} $$
of $\text{Hom}_S(F^{\epsilon}_{\sbullet},S)$ respectively as

\begin{equation}\label{complex1}
 \begin{tikzcd}[row sep=large,column sep = large]
  G \arrow[r, "d"] & F \arrow[r, "h"] & H \arrow[r, "u"] & U \arrow[r, "v"] & V
\end{tikzcd}
\end{equation}
and
\begin{equation}\label{complex2}
 \begin{tikzcd}[row sep=large,column sep = large]
  G \arrow[r, "d+\delta"] & F \arrow[r, "h+\epsilon"] & H \arrow[r, "u+\iota"] & U \arrow[r, "v+\gamma"] & V.
\end{tikzcd}
\end{equation}
We recall that $\delta$, $\epsilon$, $\iota$ and $\gamma$ are such that $\im(\delta)\subseteq \m^{N_0}F$, $\im(\epsilon)\subseteq \m^{N_0}H$, $\im(\iota)\subseteq \m^{N_0}U$ and $\im(\gamma)\subseteq \m^{N_0}V$, where $N_0$ is a natural number which can be sufficiently large, given that $N$ is made large enough.

Assuming $H_{\m}^i(R/I)$ is finitely generated, by Theorem \ref{T-LClength} we can assume $N$ is large enough so that $H_{\m}^i(R/J)$ is also finitely generated. Let $l=\ell_{\m}^i(R/I)$ and $l'=\ell_{\m}^i(R/J)$. $l$ coincides with the least integer $p$ for which $\n^p$ annihilates $H_{\m}^i(R/I)^{\vee} \cong\frac{\ker(h)}{\im(d)}$, while $l'$ coincides with the least integer $p$ for which $\n^p$ annihilates $H_{\m}^i(R/J)^{\vee}\cong\frac{\ker(h+\epsilon)}{\im(d+\delta)}$.

Moreover, since we are assuming $H_{\m}^i(R/I)$ and $H_{\m}^{i-1}(R/I)$ are finitely generated, the cohomologies of (\ref{complex1}) at $F$ and $H$ are annihilated by some power of $\n$. By Theorem \ref{T-EisenbudComplexPerturbation} (ii) applied to (\ref{complex1}) and (\ref{complex2}) it follows that $\ker(h)^*\cong \ker(h+\epsilon)^*$. As such, let $r=\text{AR}(\n,\ker(h)\subseteq F)=\text{AR}(\n,\ker(h+\epsilon)\subseteq F)$ and let $s=\text{AR}(\n,\im(h)\subseteq H)$. By Proposition \ref{L-imkerstar}, if $N_0$ is large enough we also have 
$\ker(h)\equiv \ker(h+\epsilon) \bmod \n^{N_0-s}F$. 

Next we show that $l'\le l$. Since $\n^l\ker(h)\subseteq \im(d)$ and using the fact that $\im(d)\equiv\im(d+\delta)\bmod \n^{N_0}F$, we observe that 
\begin{align*}
\n^{l}\ker(h+\epsilon) & \subseteq \n^l(\ker(h)+\n^{N_0-s}F) \\
&\subseteq \im(d)+\n^{N_0-s}F \\
&\subseteq \im(d+\delta)+\n^{N_0}F+\n^{N_0-s}F \\
&= \im(d+\delta)+\n^{N_0-s}F.
\end{align*}
Consequently, if $N_0$ is large enough, namely $N_0>l+r+s$, then 
\begin{align*}
\n^{l}\ker(h+\epsilon) & \subseteq (\im(d+\delta)+\n^{N_0-s}F)\cap \ker (h+\epsilon) \\
&= \im(d+\delta)+\n^{N_0-s}F\cap \ker (h+\epsilon) \\
&\subseteq \im(d+\delta)+\n^{N_0-s-r}\ker (h+\epsilon) \\
&\subseteq \im(d+\delta)+\n^{l+1}\ker (h+\epsilon).
\end{align*}
By Nakayama, this implies that $\n^{l}\ker(h+\epsilon) \subseteq \im(d+\delta)$, and thus $l'\le l$. 

Let now $a\in S$ be any element in $\text{Ann}(H_{\m}^i(R/I))=\text{Ann}(H_{\m}^i(R/I)^{\vee})=\text{Ann}\left(\frac{\ker(h)}{\im(d)}\right)$. 
We obtain that 
\begin{equation}\label{array1}
{\begin{aligned}
a \ker(h+\epsilon) &\subseteq a(\ker(h)+\n^{N_0-s}F) \\
&\subseteq \im(d)+\n^{N_0-s}F \\
&\subseteq \im(d+\delta) +\n^{N_0-s}F.
\end{aligned}}
\end{equation}
Hence 
\begin{equation}\label{array2}
{\begin{aligned}
a \ker(h+\epsilon) &\subseteq (\im(d+\delta) +\n^{N_0-s}F)\cap \ker(h+\epsilon) \\
&=\im(d+\delta)+\ker(h+\epsilon)\cap \n^{N_0-s}F \\
&\subseteq \im(d+\delta)+\n^{N_0-s-r}\ker(h+\epsilon) \\
&\subseteq \im(d+\delta)+\n^{l}\ker(h+\epsilon) \\
&\subseteq \im(d+\delta).
\end{aligned}}
\end{equation}
This shows that $a\in \text{Ann}\left(\frac{\ker(h+\epsilon)}{\im(d+\delta)}\right)=\text{Ann}(H_{\m}^i(R/J)^{\vee})=\text{Ann}(H_{\m}^i(R/J))$. We conclude that $\text{Ann}(H_{\m}^i(R/I)) \subseteq \text{Ann}(H_{\m}^i(R/J))$.
The proof of $\text{Ann}(H_{\m}^i(R/I))\supseteq \text{Ann}(H_{\m}^i(R/J))$ follows in a smiliar way by repeating the same arguments of (\ref{array1}) and (\ref{array2}) for $a\in \text{Ann}(H_{\m}^i(R/J))$ and with the interchanges $h \leftrightarrow h+\epsilon$ and $d \leftrightarrow d+\delta$. 
\end{proof}


If $C^{\sbullet}\colon  \cdots \to C^n \to C^{n+1}\to \cdots$ is a complex of $R$-modules, we will denote by $\tau_r C^{\sbullet}$ the complex obtained by truncating $C^{\sbullet}$ at the $r$-th place, that is, $\tau_r C^{\sbullet}\colon  0\to B^{r+1}( C^{\sbullet})\to C^{r+1}\to C^{r+2}\to\cdots$, where $B^{r+1}( C^{\sbullet})=\im(C^{r}\to C^{r+1})$. Then $H^n(\tau_r C^{\sbullet})=H^n(C^{\sbullet})$ for $n>r$ and $H^n(\tau_r C^{\sbullet})=0$ otherwise.


\begin{theorem}\label{isomorphism}
Let $(R,\m)$ be a Noetherian local ring and $I$ be an ideal of $R$ such that $H_{\m}^i(R/I)$ are finitely generated for every $i=0,1,\ldots,p$. There exists $N>0$ with the following property: for every ideal $J$ such that $J \equiv I \bmod \m^N$ and such that $I$ and $J$ have the same Hilbert function, we have that
$$H_{\m}^i(R/I)\cong H_{\m}^i(R/J)\quad \forall i=0,1,\ldots,p.$$
\end{theorem}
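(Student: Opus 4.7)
My plan is to reduce to the setting of free resolutions over a regular ambient ring, dualize, and compare the resulting truncated complexes via a perturbation-lemma-type construction.

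First, I would perform the same reduction used in the proofs of Theorems \ref{T-LClength} and \ref{PowerM}: pass to the $\mathfrak{m}$-adic completion, write $R=S/L$ for a complete regular local ring $(S,\mathfrak{n})$ of dimension $d$, let $F_\bullet$ be a minimal free $S$-resolution of $R/I$, and use \cite[Theorem 3.6]{duarte2021betti} to produce a minimal free $S$-resolution $F^\epsilon_\bullet$ of $R/J$ whose differentials are $\mathfrak{n}$-adic perturbations of those of $F_\bullet$ of arbitrarily large order $N_0$ (by choosing $N$ large). Setting $C^\bullet := \mathrm{Hom}_S(F_\bullet,S)$ and $C^\bullet_\epsilon := \mathrm{Hom}_S(F^\epsilon_\bullet,S)$, local duality combined with Matlis duality reduces the claim to showing $H^n(C^\bullet) \cong H^n(C^\bullet_\epsilon)$ as $S$-modules for every $n \in \{d-p,\ldots,d\}$.

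Under the hypothesis, $H^n(C^\bullet)$ has finite length for $n=d-p,\ldots,d$, and $H^{d+1}(C^\bullet)=0$. Thus for every such $n$, both $H^n$ and $H^{n+1}$ of $C^\bullet$ are annihilated by a power of $\mathfrak{n}$, and Theorem \ref{T-EisenbudComplexPerturbation}(ii) yields $\mathrm{im}(d^{n-1})^* = \mathrm{im}(d^{n-1}+\epsilon^{n-1})^*$ inside $\mathrm{gr}_{\mathfrak{n}}(C^n)$. Via Proposition \ref{L-imkerstar} this translates to $\ker(d^{n-1}) \equiv \ker(d^{n-1}+\epsilon^{n-1}) \bmod \mathfrak{n}^M C^{n-1}$, with $M$ growing linearly in $N_0$. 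These closeness statements are the raw material for the construction.

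The heart of the argument is to construct a chain map $\phi^\bullet : \tau_{d-p-1}C^\bullet \to \tau_{d-p-1}C^\bullet_\epsilon$ that is a quasi-isomorphism. I would set $\phi^n = \mathrm{id}_{C^n} + \alpha^n$ for $n \geq d-p$, with $\alpha^n$ having image in a sufficiently high power of $\mathfrak{n}$ that $\phi^n$ is an automorphism of $C^n$. Starting from $\phi^d = \mathrm{id}$, the chain-map equation $(d^n+\epsilon^n)\phi^n = \phi^{n+1}d^n$ rearranges to $d^n\alpha^n = \alpha^{n+1}d^n - \epsilon^n(\mathrm{id}+\alpha^n)$, whose right-hand side must lie in $\mathrm{im}(d^n)$ for a lift $\alpha^n$ to exist. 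The finite-length hypothesis on $H^{n+1}(C^\bullet)$, together with the compatibility identity $d^{n+1}\epsilon^n + \epsilon^{n+1}d^n + \epsilon^{n+1}\epsilon^n = 0$ (expressing that $C^\bullet_\epsilon$ is a complex), ensures the defect lies in $\mathrm{im}(d^n)$ up to an even higher power of $\mathfrak{n}$. A first approximation $\alpha^n$ is then chosen with values in a correspondingly high power of $\mathfrak{n}$ using the freeness of $C^n$ and Artin-Rees; the residual error is treated by the same method, and the successive corrections sum, in the $\mathfrak{n}$-adic topology of the complete ring $S$, to an exact solution. Since each $\phi^n$ is an automorphism, $\phi^\bullet$ induces the desired isomorphisms $H^n(C^\bullet) \cong H^n(C^\bullet_\epsilon)$ for $n \geq d-p$.

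The main obstacle is the delicate bookkeeping in this inductive construction: verifying that at each intermediate degree the defect in the chain-map equation really does sit in $\mathrm{im}(d^n)$ up to a controllable error (not only in the terminal degree, where the finite-length hypothesis immediately forces $\epsilon^{d-1}(C^{d-1}) \subseteq \mathrm{im}(d^{d-1})$ for $N_0$ large), and controlling the $\mathfrak{n}$-adic order of the corrections $\alpha^n$ so that the iteration converges while all the constraints are maintained. A more conceptual alternative I would also explore is to compare the truncated normalized dualizing complexes of $R/I$ and $R/J$ directly as objects of $D^b(S)$, in the spirit of the argument previewed in the introduction for Theorem B.
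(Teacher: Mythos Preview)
Your proposal follows the same overall strategy as the paper: reduce to a complete regular ambient ring $S$, dualize the free resolutions, and build a chain map of the form $\mathrm{id}+\alpha^n$ between the truncated dual complexes, with each $\alpha^n$ landing in a high power of $\mathfrak{n}$ so that $\mathrm{id}+\alpha^n$ is an automorphism of the free module. The difference is in how $\alpha^n$ is produced. You rewrite the commutativity condition as $d^n\alpha^n = \alpha^{n+1}d^n - \epsilon^n(\mathrm{id}+\alpha^n)$, which reintroduces $\alpha^n$ on the right and forces an iterative, $\mathfrak{n}$-adically convergent construction. The paper instead keeps the perturbed differential intact and solves $(g_q+\delta_q)\alpha_{q-1} = (\mathrm{id}+\alpha_q)g_q - (g_q+\delta_q)$ in a single step. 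What makes this one-shot lift possible---and what you do not invoke---is Theorem~\ref{PowerM}: since $\mathrm{Ann}(H^i_{\mathfrak m}(R/I)) = \mathrm{Ann}(H^i_{\mathfrak m}(R/J))$, the same power $\mathfrak{n}^{l_i}$ annihilates the cohomology of the \emph{perturbed} complex as well, and a short Artin--Rees computation then shows $(\mathrm{id}+\alpha_q)g_q(x)$ lies exactly in $\mathrm{im}(g_q+\delta_q)$, not merely modulo a high power. So the lift exists immediately, with no iteration and no convergence argument; the ``delicate bookkeeping'' you identify as the main obstacle is precisely what the appeal to Theorem~\ref{PowerM} eliminates. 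Finally, the alternative you mention at the end---comparing truncated normalized dualizing complexes in the derived category---is what the paper carries out in the proof of Theorem~\ref{Buchsbaum}, but that argument \emph{uses} the present theorem, so it is not an independent route to it.
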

\begin{proof}
It is enough to show $H_{\m}^i(R/I)^{\vee}\cong H_{\m}^i(R/J)^{\vee}$ for all $i=0,1,\ldots,p$, which, by local duality, is equivalent to showing that $H^{d-i}(\text{Hom}_S(F_{\sbullet},S))\cong H^{d-i}(\text{Hom}_S(F^{\epsilon}_{\sbullet},S))$ for all $i=0,1,\ldots,p$. In fact, we will show the existence of an isomorphism $$\tau_{d-p-1}\text{Hom}_S(F_{\sbullet},S)\cong \tau_{d-p-1}\text{Hom}_S(F^{\epsilon}_{\sbullet},S).$$ In order to simplify notation, we write
$$
\begin{tikzcd}
\text{Hom}_S(F_{\sbullet},S)\colon  0 \arrow[r] & P_0 \arrow[r, "g_1"] & \cdots \arrow[r] & P_{d-2} \arrow[r, "g_{d-1}"] & P_{d-1} \arrow[r, "g_{d}"] & P_d \arrow[r] & 0
\end{tikzcd}
$$
 and 
$$
\begin{tikzcd}
\text{Hom}_S(F^{\epsilon}_{\sbullet},S)\colon  0 \arrow[r] & P_0 \arrow[r, "g_1+\delta_1"] & \cdots \arrow[r] & P_{d-2} \arrow[r, "g_{d-1}+\delta_{d-1}"] & P_{d-1} \arrow[r, "g_{d}+\delta_{d}"] & P_d \arrow[r] & 0,
\end{tikzcd}
$$
where $\im(\delta_i)\subseteq \n^{N_0}P_{i}$ for every $i$ and the $P_i$ are free $S$-modules. 

We set some notation for the rest of the proof:
\begin{enumerate}
\item denote $l_i=\ell_{\m}^i(R/I)$. Since, by Theorem \ref{PowerM}, for large $N$ we have $\text{Ann}(H_{\m}^i(R/I)) =\text{Ann}(H_{\m}^i(R/J))$, we also have $\ell_{\m}^i(R/J)=l_i$. This means that $\n^{l_i}$ annihilates both ${\ker(g_{d-i+1})}/{\im(g_{d-i})}$ and ${\ker(g_{d-i+1}+\delta_{d-i+1})}/{\im(g_{d-i}+\delta_{d-i})}$ for all $i=0,\ldots,p$.
\item denote $s_n=\text{AR}(\n,\im(g_n)\subseteq P_n)$ and $t_n=\text{AR}(\n,\ker(g_n)\subseteq P_{n-1})$ for every $n$. Then, by Theorem \ref{T-EisenbudComplexPerturbation} (ii) and \cite[Proposition 2.1 (a)]{herzog2016homology}, $\text{AR}(\n,\im(g_n+\delta_n)\subseteq P_n)$ coincides with $s_n$ for every $n=d,\ldots,d-p$ and $\text{AR}(\n,\ker(g_n+\delta_n)\subseteq P_{n-1})$ coincides with $t_n$ for every $n=d,\ldots,d-p+1$.
\end{enumerate}

\vspace{0.2cm}
\emph{Claim:} We claim that, given any $N_1>0$, for $N\gg 0$ there exist maps $\alpha_n\colon P_n\to P_n$, $n=d-1,d-2,\ldots,d-p-1$, such that $\im(\alpha_n)\subseteq \n^{N_1}P_n$ and such that the all squares in the following diagram commute:
$$
\begin{tikzcd}[row sep=large,column sep = large]
P_{d-p-1} \arrow[r, "g_{d-p}"] \arrow[d, "\text{id}+\alpha_{d-p-1}"] & P_{d-p} \arrow[r, "g_{d-p+1}"] \arrow[d, "\text{id}+\alpha_{d-p}"] & \cdots \arrow[r] & P_{d-2} \arrow[r, "g_{d-1}"] \arrow[d, "\text{id}+\alpha_{d-2}"] & P_{d-1} \arrow[r, "g_{d}"] \arrow[d, "\text{id}+\alpha_{d-1}"] & P_d \arrow[r] \arrow[d, "\text{id}"] & 0 \\
P_{d-p-1} \arrow[r, "g_{d-p}+\delta_{d-p}"]                          & P_{d-p} \arrow[r, "g_{d-p+1}+\delta_{d-p+1}"]                      & \cdots \arrow[r] & P_{d-2} \arrow[r, "g_{d-1}+\delta_{d-1}"]                        & P_{d-1} \arrow[r, "g_d+\delta_d"]                              & P_d \arrow[r]                        & 0
\end{tikzcd}.
$$
\begin{proof}
We start by observing that $\im(g_d)=\im(g_d+\delta_d)$. Indeed, we have $\n^{l_0}P_d\subseteq \im(g_d) $ and also $\n^{l_0}P_d\subseteq \im(g_d+\delta_d)$. Since $\im(g_d)\equiv\im(g_d+\delta_d)\bmod\n^{N_0}P_d$ and $N_0$ can be made large enough by making $N$ large enough, we obtain that $\im(g_d)=\im(g_d+\delta_d)$ for $N\gg 0$ (it is enough $N_0\ge l_0$).

We now proceed to prove the claim by induction on $p$. Suppose first that $p=0$. We need to show that there exists $\alpha_{d-1}\colon P_{d-1}\to \n^{N_1}P_{d-1}$ such that $g_d=(g_d+\delta_d)(\text{id}+\alpha_{d-1})$. Let $N$ be large enough so that $N_0\ge N_1+s_d$. Since $\im(g_d)=\im(g_d+\delta_d)$, for every $x$ in $P_{d-1}$ we have 
\begin{align*}
(g_d+\delta_d)(x)-g_d(x)=\delta_d(x)& \in \n^{N_0}P_d\cap \im(g_d+\delta_d) \\ &\subseteq \n^{N_0-s_d}\im(g_d+\delta_d)\\ &\subseteq \n^{N_1}\im(g_d+\delta_d).
\end{align*}
Hence, there exists $\alpha_{d-1}(x)\in \n^{N_1}P_{d-1}$ such that $g_d(x)=(g_d+\delta_d)(x+\alpha_{d-1}(x))$. As $P_{d-1}$ is a free $S$-module, this shows that the desired map $\alpha_{d-1}$ exists.

Suppose now that the result is valid for $p-1$. Let $q=d-p$. Then, by hypothesis, for $N\gg 0$ there are maps $\alpha_{q+1}\colon P_{q+1}\to \n^{N_1+s_q}P_{q+1}$ and $\alpha_q\colon P_q\to \n^{N_1+s_q}P_q$ making the square on the right hand side bellow commute. 
$$
\begin{tikzcd}[row sep=large,column sep = large]
P_{q-1} \arrow[r, "g_{q}"] \arrow[d, "\text{id}+\alpha_{q-1}", dashed] & P_{q} \arrow[r, "g_{q+1}"] \arrow[d, "\text{id}+\alpha_{q}"] & P_{q+1} \arrow[d, "\text{id}+\alpha_{q+1}"] \\
P_{q-1} \arrow[r, "g_{q}+\delta_q"]                                    & P_{q} \arrow[r, "g_{q+1}+\delta_{q+1}"]                      & P_{q+1}                                    
\end{tikzcd}
$$
We need to show that there exists $\alpha_{q-1}\colon P_{q-1}\to \n^{N_1}P_{q-1}$ such that the square on the left hand side also commutes.
Since $g_{q+1}g_q=0$, the commutativity of the square on the right implies that $\im((\text{id}+\alpha_q)g_q)\subseteq \ker(q_{q+1}+\delta_{q+1})$. For $N\gg 0$ we can assume $N_0 \ge N_1+s_q$. Moreover, it is enough to prove the result assuming $N_1$ is such that $N_1+s_q-t_{q+1}\ge l_{d-q}$. For every $x$ in $P_{q-1}$ we observe that
\begingroup
\allowdisplaybreaks
\begin{align*}
(\text{id}+\alpha_q)(g_q(x)) &= g_q(x)+\alpha_q(g(x)) \\
&\in (\im(g_q)+\n^{N_1+s_q}P_q)\cap \ker(q_{q+1}+\delta_{q+1}) \\
& \subseteq (\im(g_q+\delta_q)+\n^{N_0}P_q+\n^{N_1+s_q}P_q)\cap \ker(q_{q+1}+\delta_{q+1}) \\
&=\im(g_q+\delta_q)+\n^{N_1+s_q}P_q\cap \ker(q_{q+1}+\delta_{q+1}) \\
&\subseteq \im(g_q+\delta_q)+\n^{N_1+s_q-t_{q+1}} \ker(q_{q+1}+\delta_{q+1}) \\
&\subseteq \im(g_q+\delta_q).
\end{align*}
\endgroup
It follows that 
\begingroup
\allowdisplaybreaks
\begin{align*}
(\text{id}+\alpha_q)(g_q(x))-(g_q+\delta_q)(x) &= \alpha_q(g_q(x))-\delta_q(x)\\
&\in (\n^{N_1+s_q}P_q+\n^{N_0}P_q)\cap \im(g_q+\delta_q)\\
&= \n^{N_1+s_q}P_q\cap \im(g_q+\delta_q)\\
&\subseteq \n^{N_1} \im(g_q+\delta_q).
\end{align*}
\endgroup
Hence, there exists $\alpha_{q-1}(x)\in \n^{N_1}P_{q-1}$ such that $(\text{id}+\alpha_q)(g_q(x))=(g_q+\delta_q)(x+\alpha_{q-1}(x))$. As $P_{q-1}$ is a free $S$-module, this shows that the desired map $\alpha_{q-1}$ exists.
\end{proof}

Since $\im(\alpha_n)\subseteq \n P_n$, the maps $\text{id}+\alpha_n\colon P_n\to P_n$ are isomorphisms for all $n=d-1,d-2,\ldots,d-p-1$. Therefore, the maps $\text{id}_{P_d},\text{id}+\alpha_{d-1},\ldots, \text{id}+\alpha_{d-p}$ provide us with an isomorphism of complexes $\tau_{d-p-1}\text{Hom}_S(F_{\sbullet},S)\to \tau_{d-p-1}\text{Hom}_S(F^{\epsilon}_{\sbullet},S)$.
\end{proof}



We present some immediate consequences of Theorems \ref{T-LClength} and \ref{isomorphism}. For a finitely generated $R$-module $M$ we will denote $f_{\m}(M)=\inf\{i\colon H_{\m}^i(M)\text{ is not finitely generated}\}$. 

\begin{corollary}\label{corollaryl}
Let $(R,\m)$ be a Noetherian local ring and let $I$ be an ideal of $R$. There exists $N>0$ with the following property: for every ideal $J$ such that $J \equiv I \bmod \m^N$ and such that $I$ and $J$ have the same Hilbert function, one has
\begin{itemize}\setlength{\itemsep}{1pt}
  \setlength{\parskip}{0pt}
  \setlength{\parsep}{0pt}
\item[(i)] $f_{\m}(R/I)\le f_{\m}(R/J)$.
\item[(ii)]  $\emph{\text{depth}}(R/I)= \emph{\text{depth}}(R/J)$. 
\item[(iii)] If $R/I$ is Cohen-Macaulay, then so is $R/J$. 
\item[(iv)] If $R/I$ is generalized Cohen-Macaulay, then so is $R/J$. Moreover, $H^i_{\m}(R/I)\cong H^i_{\m}(R/J)$ for every $i<\dim(R/I)$.
\end{itemize}
\end{corollary}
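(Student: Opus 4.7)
The plan is to deduce each part from the results already established. I would choose $N$ large enough that Theorem \ref{T-LClength}, Theorem \ref{isomorphism}, and the construction of the perturbed free resolution $F^\epsilon_\sbullet$ recalled at the beginning of Section 3 all apply simultaneously; only finitely many values of the parameter matter since $H^i_\m(R/I)=0$ for $i>\dim(R/I)$. As in the proofs of the main theorems, passage to the $\m$-adic completion preserves every quantity involved, so I would reduce to the case where $R$ is complete.

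Part (iv) is essentially immediate from Theorem \ref{isomorphism}. If $R/I$ is generalized Cohen-Macaulay, then $H^i_\m(R/I)$ has finite length for every $i<\dim(R/I)$, hence is finitely generated there. Applying Theorem \ref{isomorphism} with $p=\dim(R/I)-1$ yields isomorphisms $H^i_\m(R/I)\cong H^i_\m(R/J)$ for every $i<\dim(R/I)$; since Hilbert functions determine the dimension of a local ring, $\dim(R/J)=\dim(R/I)$, so each $H^i_\m(R/J)$ has finite length for $i<\dim(R/J)$ and $R/J$ is generalized Cohen-Macaulay. Part (i) is just as quick: if $i<f_\m(R/I)$ then by definition $H^i_\m(R/I)$ is finitely generated, and Theorem \ref{T-LClength}(i) forces $H^i_\m(R/J)$ to be finitely generated as well, hence $f_\m(R/J)\ge f_\m(R/I)$.

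For (ii), I would exploit the construction recalled at the start of Section 3: writing the complete ring $R$ as $S/L$ with $(S,\n)$ a complete regular local ring, for $N$ sufficiently large we obtain minimal free resolutions $F_\sbullet$ of $R/I$ and $F^\epsilon_\sbullet$ of $R/J$ over $S$ with exactly the same underlying free modules. Consequently $\beta_i^S(R/I)=\beta_i^S(R/J)$ for every $i$, so $\text{pd}_S(R/I)=\text{pd}_S(R/J)$, and the Auslander-Buchsbaum formula over $S$ gives $\text{depth}_S(R/I)=\text{depth}_S(R/J)$. Since these modules are annihilated by $L$, their depths over $R$ and over $S$ coincide, yielding (ii). Finally, (iii) follows from (ii) together with $\dim(R/I)=\dim(R/J)$ and the characterization that $R/I$ is Cohen-Macaulay precisely when $\text{depth}(R/I)=\dim(R/I)$. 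The real work has been done in Theorems \ref{T-LClength} and \ref{isomorphism} together with the Section 3 setup; the corollary only consists in routing each statement to the correct result, so no serious obstacle is expected.
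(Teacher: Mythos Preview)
Your treatment of parts (i), (iii), and (iv) matches the paper's proof essentially verbatim. The interesting divergence is in part (ii).

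For (ii), the paper argues directly at the level of local cohomology: setting $t=\operatorname{depth}(R/I)$, it first gets $t\le\operatorname{depth}(R/J)$ from Theorem~\ref{T-LClength}, and then derives the reverse inequality by a contradiction argument that applies Theorem~\ref{T-EisenbudComplexPerturbation} and Proposition~\ref{L-imkerstar} to the relevant piece of $\operatorname{Hom}_S(F_\sbullet,S)$, together with Artin--Rees bounds and Nakayama. Your route is different and shorter: from the Section~3 setup, the perturbed resolution $F^\epsilon_\sbullet$ is minimal with the same free modules as $F_\sbullet$, so $\operatorname{pd}_S(R/I)=\operatorname{pd}_S(R/J)$, and Auslander--Buchsbaum over the regular ring $S$ gives equal depths. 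This is correct and more economical; the paper's argument has the virtue of staying entirely inside the local-cohomology framework it has been developing, and of not invoking Auslander--Buchsbaum, but yours is a perfectly valid shortcut.

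One small caveat: the paper's setup literally says only that $F_i=0$ for $i>\operatorname{pd}_S(R/I)$ and that $F^\epsilon_\sbullet$ is minimal, not that $F_\sbullet$ itself is minimal. Your sentence ``Consequently $\beta_i^S(R/I)=\beta_i^S(R/J)$ for every $i$'' thus asserts a bit more than the setup states. This is harmless for your purposes, since from those two facts alone one already gets $\operatorname{pd}_S(R/J)=\max\{i:F_i\ne 0\}=\operatorname{pd}_S(R/I)$, which is all you need; alternatively, one may simply take $F_\sbullet$ minimal from the start, as the cited result of \cite{duarte2021betti} permits.
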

\begin{proof}
(i) is a direct corollary of Theorem \ref{T-LClength} and (iv) is a particular case of Theorem \ref{isomorphism}.

We now show (ii). Let $t=\text{depth}(R/I)$. As a consequence of Theorem \ref{T-LClength}, for sufficiently large $N$, $H_{\m}^i(R/I)=0$ implies $H_{\m}^i(R/J)=0$. Therefore, $t\le \text{depth}(R/J)$. 
Let $(-)'$ denote the functor $\text{Hom}_S(-,S). $ For simplicity of notation, rewrite the piece 
$$\begin{tikzcd}
 F_{d-t-1}' \arrow[r, "f_{d-t}'"] & F_{d-t}' \arrow[r, "f_{d-t+1}'"] & F_{d-t+1}' \arrow[r, "f_{d-t+2}'"] & F_{d-t+2}' 
\end{tikzcd} $$
of $\text{Hom}_S(F_{\sbullet},S)$ and the piece
$$ \begin{tikzcd}[row sep=large,column sep = large]
  F_{d-t-1}' \arrow[r, "f_{d-t}'+\epsilon_{d-t}'"] & F_{d-t}' \arrow[r, "f_{d-t+1}'+\epsilon_{d-t+1}'"] & F_{d-t+1}' \arrow[r, "f_{d-t+2}'+\epsilon_{d-t+2}'"] & F_{d-t+2}'
\end{tikzcd} $$
of $\text{Hom}_S(F^{\epsilon}_{\sbullet},S)$ respectively as

\begin{equation}\label{complex1}
 \begin{tikzcd}[row sep=large,column sep = large]
  G \arrow[r, "g"] & F \arrow[r, "f"] & H \arrow[r, "h"] & U 
\end{tikzcd}
\end{equation}
and
\begin{equation}\label{complex2}
 \begin{tikzcd}[row sep=large,column sep = large]
  G \arrow[r, "g+\delta"] & F \arrow[r, "f+\epsilon"] & H \arrow[r, "u+\gamma"] & U,
\end{tikzcd}
\end{equation}
where $\im(\delta)\subseteq \m^{N_0}F$, $\im(\epsilon)\subseteq \m^{N_0}H$ and $\im(\gamma)\subseteq \m^{N_0}U$. $N_0$ can be assumed to be large enough, given that $N$ is large enough.
As $H_{\m}^{t-1}(R/I)$ and $H_{\m}^{t-2}(R/I)$ are zero, this means that the cohomologies of $\text{Hom}_S(F_{\sbullet},S)$ at $H$ and $U$ are zero. Therefore, by Theorem \ref{T-EisenbudComplexPerturbation}, $\im(f)^*=\im(f+\epsilon)^*$. Let $s=\text{AR}(\n,\im(f)\subseteq H)$ and $r=\text{AR}(\n,\ker(f)\subseteq F)$. According to Proposition \ref{L-imkerstar}, if $N_0$ is large enough, we also have $\ker(f)\equiv \ker(f+\epsilon) \bmod \n^{N_0-s}F$.

By means of contradiction, suppose $\text{depth}(R/J)>t$, i.e., suppose $\ker(f+\epsilon)=\im(g+\delta)$. Then we have that 
\begin{align*}
\ker(f) & \subseteq \ker(f+\epsilon)+\n^{N_0-s}F \\
&= \im(g+\delta)+\n^{N_0-s}F \\
&\subseteq \im(g)+\n^{N_0}F+\n^{N_0-s}F \\
&= \im(g)+\n^{N_0-s}F.
\end{align*}
Consequently, if $N_0$ is large enough, namely $N_0>r+s$, then 
\begin{align*}
\ker(f) & \subseteq (\im(g)+\n^{N_0-s}F)\cap \ker(f) \\
&= \im(g)+\n^{N_0-s}F\cap \ker (f) \\
&\subseteq \im(g)+\n^{N_0-s-r}\ker(f) \\
&\subseteq \im(g)+\n\ker(f).
\end{align*}
By Nakayama, this implies that $\ker(f)\subseteq \im(g)$ and therefore $H_{\m}^t(R/I)=0$, which is a contradiction. As such, we must have $\text{depth}(R/J)=t$. This finishes the proof of (ii).

Finally, we show (iii). If $N$ is large enough and $R/I$ is Cohen-Macaulay, then, by (ii), $\dim(R/J)=\dim(R/I)=\text{depth }(R/I)= \text{depth }(R/J)$, which means that $R/J$ is also Cohen-Macaulay. The equality $\dim(R/J)=\dim(R/I)$ holds from the hypothesis that $I$ and $J$ have the same Hilbert function.
\end{proof}

We now apply the previous results to perturbations of Buchsbaum rings. Recall that a local ring $(R,\m)$ is \emph{Buchsbaum} if the difference $\ell(R/({\bf x})R)-e({\bf x},R)$, where ${\bf x}$ is a system of parameters of $R$, is an invariant of $R$, that is, does not depend on ${\bf x}$. Among several equivalent characterizations of Buchsbaum rings (see \cite{schenzel1982applicationsdualizing}, \cite{stuckard1978toward} and \cite{stuckard1986buchsbaum}), we will use a characterization from \cite{schenzel1982applicationsdualizing}, which relies on the dualizing complex of $R$. 

Denote by $\mathcal D(R)$ 
the derived category of the category whose objects are complexes of $R$-modules. 
A bounded bellow complex of $R$-modules $D^{\sbullet}$ is called a \emph{dualizing complex of} $R$ if $D^{\sbullet}$ has finite cohomology and if there exists an integer $h$ such that
$$
H^i(\text{Hom}_R(k,D^{\sbullet}))\cong\left\{\begin{array}{ll}
0 & \text{if } i\neq h \\
k & \text{if } i= h
\end{array}\right..
$$
Furthermore, $D^{\sbullet}$ is said to be \emph{normalized} if $h=0$. A normalized dualizing complex of $R$, if it exists, is unique up to isomorphism in $\mathcal D(R)$. If $R$ has a normalized dualizing complex $D^{\sbullet}$, then according to \cite[Theorem 2.3 (v)]{schenzel1982applicationsdualizing}, $R$ is Buchsbaum if and only if $\tau_{-\dim(R)}D^{\sbullet}$ is isomorphic in $\mathcal D(R)$ to a complex of $k$-vector spaces, where $k$ is the residue field of $R$.

\begin{theorem}\label{Buchsbaum}
Let $(R,\m)$ be a Noetherian local ring and $I$ be an ideal of $R$ such that $R/I$ is generalized Cohen-Macaulay. There exists $N>0$ with the following property: for every ideal $J$ such that $J \equiv I \bmod \m^N$ and such that $I$ and $J$ have the same Hilbert function, we have that $R/I$ is Buchsbaum if and only if $R/J$ is Buchsbaum. 
\end{theorem}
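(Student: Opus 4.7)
The plan is to show that, for $N$ sufficiently large, the truncated normalized dualizing complexes $\tau_{-\dim(R/I)}D^\bullet_{R/I}$ and $\tau_{-\dim(R/J)}D^\bullet_{R/J}$ are isomorphic in the derived category, and then invoke the Schenzel criterion recalled just before the statement. As in the preceding proofs, I first reduce to $R$ complete and write $R = S/L$ with $(S,\mathfrak{n})$ a complete regular local ring of dimension $d$. Fix a minimal free resolution $F_\bullet$ of $R/I$ over $S$ and, via \cite[Theorem 3.6]{duarte2021betti}, a minimal free resolution $F^\epsilon_\bullet$ of $R/J$ which is an $\mathfrak{n}$-adic perturbation of $F_\bullet$ of arbitrarily deep prescribed order. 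After a shift by $[d]$, the complexes $C^\bullet := \text{Hom}_S(F_\bullet,S)$ and $C^\bullet_\epsilon := \text{Hom}_S(F^\epsilon_\bullet,S)$ represent the normalized dualizing complexes of $R/I$ and $R/J$, respectively. By Corollary \ref{corollaryl}, for $N$ large $R/J$ is also generalized Cohen--Macaulay with $\dim(R/J)=\dim(R/I)$, so the relevant truncation level coincides on both sides, namely $d-\dim(R/I)$ before the shift.

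The central step is to apply the construction carried out in the Claim inside the proof of Theorem \ref{isomorphism} with $p = \dim(R/I)-1$. The generalized Cohen--Macaulay hypothesis ensures that $H^i_\mathfrak{m}(R/I)$ has finite length for every $i = 0, \ldots, p$, so the hypotheses of Theorem \ref{isomorphism} are satisfied. This yields $S$-linear maps $\mathrm{id}+\alpha_n$ on the terms $\text{Hom}_S(F_n,S)$ with $\alpha_n$ of arbitrarily deep $\mathfrak{n}$-adic order that commute with the differentials and assemble into an isomorphism
$$
\tau_{d-\dim(R/I)} C^\bullet \xrightarrow{\ \sim\ } \tau_{d-\dim(R/I)} C^\bullet_\epsilon.
$$
Shifting by $[d]$ produces an isomorphism $\tau_{-\dim(R/I)} D^\bullet_{R/I} \cong \tau_{-\dim(R/J)} D^\bullet_{R/J}$ in the derived category.

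The conclusion is then immediate from \cite[Theorem 2.3(v)]{schenzel1982applicationsdualizing}: $R/I$ is Buchsbaum precisely when $\tau_{-\dim(R/I)} D^\bullet_{R/I}$ is isomorphic in the derived category to a complex of $k$-vector spaces, and the identical criterion applies to $R/J$; the isomorphism exhibited above therefore transfers the Buchsbaum property in both directions. The main subtlety I expect is ensuring that the isomorphism produced at the level of complexes of $S$-modules genuinely induces an isomorphism in the derived categories $\mathcal{D}(R/I)$ and $\mathcal{D}(R/J)$ in which Schenzel's criterion is phrased. Since the maps $\mathrm{id}+\alpha_n$ are automorphisms of the free $S$-modules $\text{Hom}_S(F_n,S)$ (being units in the endomorphism ring) and the property of being quasi-isomorphic to a complex of $k$-vector spaces is intrinsic to the complex once its cohomologies are annihilated by $\mathfrak{m}$, this descent step should be routine to spell out; once verified, both implications of the theorem follow at once.
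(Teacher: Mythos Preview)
Your proposal is correct and follows the paper's proof essentially verbatim: reduce to the complete case, identify $\text{Hom}_S(F_\bullet,S)[d]$ with the normalized dualizing complex, invoke the isomorphism of truncated complexes established in the Claim within the proof of Theorem~\ref{isomorphism} (with $p=\dim(R/I)-1$, giving truncation at $d-\dim(R/I)$), and conclude via Schenzel's criterion. The derived-category subtlety you flag is treated by the paper at the same level of detail---it simply works in $\mathcal D(R)$ throughout rather than distinguishing $\mathcal D(R/I)$ from $\mathcal D(R/J)$.
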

\begin{proof}
Since $R/I$ (resp. $R/J$) is Buchsbaum if and only if $\widehat R/\widehat I$ (resp. $\widehat R/\widehat J$) is Buchsbaum, we can assume $R$ is complete. Hence we can resort to the same setting and notation as in the previous proofs. Let $E^{\sbullet}$ be an injective resolution of the regular ring $S$. We observe that $\text{Hom}_S(R/I,E^{\sbullet})$ is a dualizing complex for $R/I$. Indeed, 
\begin{align*}
H^i(\text{Hom}_{R/I}(k,\text{Hom}_S(R/I,E^{\sbullet})))& \cong
H^i(\text{Hom}_S(k,\text{Hom}_S(R/I,E^{\sbullet}))) \\
&\cong H^i(\text{Hom}_S(k,E^{\sbullet})) \cong \text{Ext}_{S}^i(k,S)
\cong\left\{\begin{array}{ll}
0 & \text{if } i\neq d \\
k & \text{if } i= d
\end{array}\right.,
\end{align*}
where $d=\dim(S)$.
Therefore, the complex $\text{Hom}_S(R/I,E^{\sbullet})[d]$, obtained by a shift of $\text{Hom}_S(R/I,E^{\sbullet})$ $d$ steps to the left, is a normalized dualizing complex of $R/I$. As $F_{\sbullet}$ is a free resolution of $R/I$, the complexes $\text{Hom}_S(F_{\sbullet},S)$ and $\text{Hom}_S(R/I,E^{\sbullet})$ isomorphic in $\mathcal D(R)$. Let $d'=\dim(R/I)=\dim(R/J)$. We then have that, in $\mathcal D(R)$,
\begin{align*}
\tau_{-d'}(\text{Hom}_S(R/I,E^{\sbullet})[d]) =
(\tau_{d-d'}\text{Hom}_S(R/I,E^{\sbullet}))[d] 
\cong (\tau_{d-d'}\text{Hom}_S(F_{\sbullet},S))[d].
\end{align*}
Similarly, $\text{Hom}_S(R/J,E^{\sbullet})[d]$ is a normalized dualizing complex for $R/J$ and $$\tau_{-d'}(\text{Hom}_S(R/J,E^{\sbullet})[d])\cong(\tau_{d-d'}\text{Hom}_S(F^{\epsilon}_{\sbullet},S))[d]\quad\text{in }\mathcal D(R).$$
Since $R/I$ is generalized Cohen-Macaulay, i.e., $H_{\m}^i(R/I)$ is finitely generated for all $i=0,1,\ldots,d'-1$, by the proof of Theorem \ref{isomorphism} if $N$ is large enough then there exists an isomorphism $\tau_{d-d'}\text{Hom}_S(F_{\sbullet},S)\to \tau_{d-d'}\text{Hom}_S(F^{\epsilon}_{\sbullet},S))$. We conclude that 
$$\tau_{-d'}(\text{Hom}_S(R/I,E^{\sbullet})[d]) \cong \tau_{-d'}(\text{Hom}_S(R/J,E^{\sbullet})[d]) $$ 
Hence, $\tau_{-d'}(\text{Hom}_S(R/I,E^{\sbullet})[d])$ is isomorphic to a complex of $k$-vector spaces in $\mathcal D(R)$ if and only if $\tau_{-d'}(\text{Hom}_S(R/J,E^{\sbullet})[d])$ is isomorphic in $\mathcal D(R)$ to that same complex of $k$-vector spaces. By \cite[Theorem 2.3 (v)]{schenzel1982applicationsdualizing}, we conclude that $R/I$ is Buchsbaum if and only if so if $R/J$.
\end{proof}

Finally, we show that, under some aditional assumptions,  Serre's properties $(S_n)$ are also preserved under small perturbations which preserve the Hilbert function. We will need the following lemma.

\begin{lemma}\label{L-dimensionstar}
Let $(R,\m)$ be a Noetherian local ring and let $N\subseteq M$ be finitely generated $R$-modules. Then
$$\emph{\text{Ann}}_R(N)^*\subseteq \emph{\text{Ann}}_{\emph{\text{gr}}_{\m}(R)}(N^*)\subseteq\sqrt{\emph{\text{Ann}}_R(N)^*}.$$
In particular, $\dim_R(N)=\dim_{\emph{\text{gr}}_{\m}(R)}(N^*)$.
\end{lemma}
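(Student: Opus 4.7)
My plan is to prove both ideal inclusions directly; the dimension equality then follows formally.

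The first inclusion $\text{Ann}_R(N)^* \subseteq \text{Ann}_{\gr_\m(R)}(N^*)$ unpacks to a tautology: any homogeneous element of $\text{Ann}_R(N)^*$ in degree $p$ lifts to some $a \in \text{Ann}_R(N) \cap \m^p$, and for any $n \in N \cap \m^q M$ representing an element of $N^*_q$, the product $an$ vanishes in $N$, hence lies in $N \cap \m^{p+q+1}M$ trivially, giving $a^* \cdot n^* = 0$ in $N^*_{p+q}$.

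The heart of the argument is the second inclusion, for which I would use the Cayley--Hamilton determinantal trick applied to the faithful module $N$ over $\bar R := R/\text{Ann}_R(N)$ (with maximal ideal $\bar\m$). Fix a homogeneous $a^* \in \text{Ann}_{\gr_\m(R)}(N^*)$ of degree $p$ with lift $a \in \m^p$. The annihilation condition reads $a(N \cap \m^q M) \subseteq N \cap \m^{q+p+1}M$ for every $q$. Iterating gives $a^k N \subseteq N \cap \m^{k(p+1)}M$, and by Artin--Rees ($N \cap \m^j M \subseteq \m^{j-s}N$ for $j \geq s$, with $s = \text{AR}(\m, N \subseteq M)$) we obtain $a^k N \subseteq \m^{kp+1}N$ once $k > s$. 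Viewing multiplication by $\bar a^k$ as an endomorphism of $N$ with image in $\bar\m^{kp+1}N$, the determinantal trick produces a monic relation $(\bar a^k)^\ell + \sum_{i=1}^\ell r_i (\bar a^k)^{\ell-i} = 0$ in $\text{End}_{\bar R}(N)$, with $r_i \in \bar\m^{(kp+1)i}$, where $\ell$ is the number of generators of $N$; by faithfulness, this identity holds in $\bar R$. A term-by-term valuation check shows $r_i(\bar a^k)^{\ell-i} \in \bar\m^{(kp+1)i + kp(\ell-i)} = \bar\m^{kp\ell+i} \subseteq \bar\m^{kp\ell+1}$, so $\bar a^{k\ell} \in \bar\m^{kp\ell+1}$. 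Lifting back, $a^{k\ell} \equiv b \pmod{\m^{kp\ell+1}}$ for some $b \in \text{Ann}_R(N)$, and taking initial forms yields $(a^*)^{k\ell} = b^* \in \text{Ann}_R(N)^*$, whence $a^* \in \sqrt{\text{Ann}_R(N)^*}$.

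Combining the two inclusions gives $\sqrt{\text{Ann}_R(N)^*} = \sqrt{\text{Ann}_{\gr_\m(R)}(N^*)}$, so $\dim \gr_\m(R)/\text{Ann}_R(N)^* = \dim \gr_\m(R)/\text{Ann}_{\gr_\m(R)}(N^*)$. Proposition \ref{quotientofstars} identifies the left-hand side with $\gr_{\bar\m}(\bar R)$, whose dimension equals $\dim \bar R = \dim_R(N)$ by the standard equality between the dimensions of a Noetherian local ring and its associated graded; the right-hand side equals $\dim_{\gr_\m(R)}(N^*)$ by definition of the dimension of a module. The main subtlety I anticipate is that knowing $a^k N \subseteq \m^{kp+1}N$ does not by itself give $\bar a^k \in \bar\m^{kp+1}$---this is exactly where Cayley--Hamilton is essential, converting the module-level containment into a ring-level valuation estimate via the faithful action on $N$.
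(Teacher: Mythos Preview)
Your proposal is correct and follows essentially the same route as the paper: both arguments handle the first inclusion as a tautology, then for the second inclusion iterate the annihilation condition to obtain $a^k N \subseteq \m^{kp+1}N$ via Artin--Rees and apply the determinantal/Cayley--Hamilton trick to extract a ring-level congruence $a^{k\ell} \in \text{Ann}_R(N) + \m^{kp\ell+1}$, after which the dimension equality follows from $\sqrt{\text{Ann}_R(N)^*} = \sqrt{\text{Ann}_{\gr_\m(R)}(N^*)}$. The only cosmetic differences are that the paper works directly in $R$ with the adjugate matrix rather than passing to $\bar R$ and invoking faithfulness, and that the identification $\gr_\m(R)/\text{Ann}_R(N)^* \cong \gr_{\bar\m}(\bar R)$ is really the definition of the initial ideal rather than a consequence of Proposition~\ref{quotientofstars}.
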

\begin{proof}
It is clear that $$\text{Ann}_R(N)^*=\bigoplus_{n\ge 0}{\frac{\text{Ann}_R(N)\cap \m^n+\m^{n+1}}{\m^{n+1}}} \quad\text{ annihilates } \quad N^*\cong\bigoplus_{n\ge 0}{\frac{N\cap \m^nM}{N\cap \m^{n+1}M}}.$$

It remains to prove the second inclusion. Take $\bar a\in \m^n/\m^{n+1}$ a homogenous element in $\grm(R)$, represented by $a\in\m^n$, which annihilates $N^*$. This means that 
$$a(N\cap \m^kM)\subseteq N\cap \m^{k+n+1}M\quad\text{for all } k\ge 0.$$
Applying this consecutively for $k=0,n+1,2(n+1),\ldots,(t-1)(n+1)$ we obtain 
$$a^tN\subseteq N\cap \m^{t(n+1)}M\quad\text{for all } t\ge 1.$$
In particular, for every $t>\overline{\text{AR}}(\m,N\subseteq M)=:s$ we have that 
$$a^tN\subseteq N\cap \m^{t(n+1)}M\subseteq \m^{tn+t-s}N\subseteq \m^{tn+1}N.$$
Let $N=Rx_1+\cdots Rx_r$. Then, for each $i=1,\ldots,r$, we have $a^tx_i=\sum_{j=1}^{r}{ b_{ij}x_j}$ for some $b_{ij}\in\m^{tn+1}$. Let $B$ be the matrix $(\delta_{ij}a^t-b_{ij})$, where $\delta_{ij}$ is the Kronecker delta function. Denoting by $x$ the vector $(x_1\cdots x_r)^T$, we have $Bx=0$. Since $\text{det}(B)x=\text{adj}(B)Bx=0$, $\text{det}(B)$ is in $\text{Ann}_R(N)$. But, as $a^t\in\m^{tn}$ and $b_{ij}\in\m^{tn+1}$, $\text{det}(B)$ is of the form $a^{tr}+c$, with $c\in \m^{tn(r-1)+tn+1}=\m^{tnr+1}$. Hence, $a^{tr}\in \text{Ann}_R(N)\cap \m^{tnr}+\m^{tnr+1}$, meaning that $\bar a^{tr}\in \text{Ann}_R(N)^*$. This finishes the proof of the second desired inclusion.

We thus have $\sqrt{\text{Ann}_R(N)^*}=\sqrt{\text{Ann}_{\grm(R)}(N^*)}$, from which it follows that 
\begin{equation*}
\begin{split}
\dim_R(N) & =\dim\left(\frac{R}{\text{Ann}_R(N)}\right)=\dim \left( \grm\left(\frac{R}{\text{Ann}_R(N)}\right)\right) =\dim \left(\frac{\grm(R)}{\text{Ann}_R(N)^*}\right)
\\ & =\dim \left(\frac{\grm(R)}{\sqrt{\text{Ann}_R(N)^*}}\right) =\dim \left(\frac{\grm(R)}{\sqrt{\text{Ann}_{\grm(R)}(N^*)}}\right) 
\\ & = \dim \left(\frac{\grm(R)}{\text{Ann}_{\grm(R)}(N^*)}\right)=\dim_{\grm(R)}(N^*). \qedhere
\end{split} 
\end{equation*}
\end{proof}

\begin{theorem}\label{serreperturbation}
Suppose $(R,\m)$ is excellent. Let $I$ be an ideal of $R$ for which $R/I$ is formally equidimensional. There exists $N>0$ with the following property: if $J$ is an ideal of $R$ for which $I\equiv J\bmod \m^N$, $R/J$ is formally equidimensional and $R/J$ has the same Hilbert function as $R/I$, then for every $n\ge 0$ if $R/I$ satisfies Serre's property $(S_n)$ then so does $R/J$.
\end{theorem}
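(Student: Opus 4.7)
The plan is to reduce Theorem~\ref{serreperturbation} to a Krull-dimension inequality between the Ext modules $\text{Ext}^{j}_{S}(R/I,S)$ and $\text{Ext}^{j}_{S}(R/J,S)$, and then extract that inequality from Theorem~\ref{T-EisenbudComplexPerturbation}(i) together with Lemma~\ref{L-dimensionstar}. First I would pass to $\m$-adic completion: since $R$ is excellent, the map $R\to \widehat{R}$ is regular, hence Serre's condition $(S_n)$ transfers in both directions between $R/I$ and $\widehat R/\widehat I$ and between $R/J$ and $\widehat R/\widehat J$; formal equidimensionality is by definition a property of the completion; and by the discussion opening Section~\ref{SectionBetti}, both the congruence $I\equiv J\bmod\m^N$ and the coincidence of Hilbert functions transfer to $\widehat I$ and $\widehat J$. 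Thus we may assume $R$ is complete, write $R=S/L$ by Cohen's structure theorem with $(S,\n)$ a regular local ring of dimension $d$, and reuse the free resolution $F_{\sbullet}$ of $R/I$ and its perturbation $F^{\epsilon}_{\sbullet}$, which is a free resolution of $R/J$. For $N$ large enough $F^{\epsilon}_{\sbullet}$ is an $\n$-adic approximation of $F_{\sbullet}$ of arbitrarily high order.

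The main tool from outside the paper is the following classical characterization: for a formally equidimensional quotient $A=S/\mathfrak{a}$ of dimension $d'$ of a regular local ring $S$ of dimension $d$, $A$ satisfies $(S_n)$ if and only if
$$\dim_{S}\text{Ext}^{j}_{S}(A,S)\le d-j-n\quad\text{for every } j>d-d',$$
which is a consequence of local duality and the theory of the normalized dualizing complex (compare \cite{schenzel1982applicationsdualizing}). Since $R/I$ and $R/J$ share the Hilbert function they have the common dimension $d'$, and both are formally equidimensional by hypothesis, so proving the theorem amounts to showing
$$\dim_{S}\text{Ext}^{j}_{S}(R/J,S)\le \dim_{S}\text{Ext}^{j}_{S}(R/I,S)\quad\text{for every } j>d-d'.$$

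To establish this inequality I would use local duality to identify $\text{Ext}^{j}_{S}(R/I,S)\cong H^{j}(\text{Hom}_{S}(F_{\sbullet},S))$ and $\text{Ext}^{j}_{S}(R/J,S)\cong H^{j}(\text{Hom}_{S}(F^{\epsilon}_{\sbullet},S))$. As $\text{Hom}_{S}(F^{\epsilon}_{\sbullet},S)$ is an $\n$-adic approximation of $\text{Hom}_{S}(F_{\sbullet},S)$ of arbitrarily high order, Theorem~\ref{T-EisenbudComplexPerturbation}(i) gives that the graded initial module $H^{j}(\text{Hom}_{S}(F^{\epsilon}_{\sbullet},S))^{*}$ is a subquotient of $H^{j}(\text{Hom}_{S}(F_{\sbullet},S))^{*}$ as a $\text{gr}_{\n}(S)$-module. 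In particular the annihilator in $\text{gr}_{\n}(S)$ of the former contains the annihilator of the latter, which yields
$$\dim_{\text{gr}_{\n}(S)} H^{j}(\text{Hom}_{S}(F^{\epsilon}_{\sbullet},S))^{*}\le \dim_{\text{gr}_{\n}(S)} H^{j}(\text{Hom}_{S}(F_{\sbullet},S))^{*}.$$
Lemma~\ref{L-dimensionstar} then converts both sides into the Krull dimensions of the corresponding Ext modules as $S$-modules, giving the required comparison.

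The main obstacle I anticipate is locating and verifying the precise Ext-dimension characterization of $(S_n)$ in the formally equidimensional but not necessarily Cohen-Macaulay setting, and carefully handling the extremal degree $j=d-d'$, where $\dim_{S}\text{Ext}^{d-d'}_{S}(A,S)=d'$ for any formally equidimensional $A$ of dimension $d'$ and which must be excluded from the range. Once these are in place, the perturbation part of the argument relies only on Theorem~\ref{T-EisenbudComplexPerturbation}(i) and Lemma~\ref{L-dimensionstar}, both already developed in the paper, and the fact that we are assuming $R/J$ is also formally equidimensional is used exactly to apply the characterization in the reverse direction.
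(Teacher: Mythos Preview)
Your proposal is correct and follows essentially the same route as the paper: reduce to the completion via excellence, invoke the Ext-dimension characterization of $(S_n)$ for equidimensional quotients of a regular local ring (the paper cites \cite[Proposition~2.11]{mavarbaro2019regularity} rather than Schenzel), and then obtain the inequality $\dim_S\text{Ext}^{j}_S(R/J,S)\le\dim_S\text{Ext}^{j}_S(R/I,S)$ from Theorem~\ref{T-EisenbudComplexPerturbation}(i) combined with Lemma~\ref{L-dimensionstar}. One small terminological slip: the identification $\text{Ext}^{j}_S(R/I,S)\cong H^{j}(\text{Hom}_S(F_{\sbullet},S))$ is just the definition of Ext via a free resolution, not local duality.
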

\begin{proof}
As $R$ is excellent, by \cite[Theorem 23.8]{matsumuraBook} $R/I$ satisfies $(S_n)$ if and only  if $\widehat R /\widehat I$ satisfies $(S_n)$. Since $\widehat R /\widehat I$ is equidimensional, by \cite[Proposition 2.11]{mavarbaro2019regularity}, this is equivalent to $$\dim_S\text{Ext}_S^{d-i}(\widehat R /\widehat I,S)\le i-n$$ for every $i=0,\ldots,d-1$, where $d=\dim(S)$. Therefore, it is enough to show that $\dim_S\text{Ext}_S^{d-i}(\widehat R /\widehat J,S)\le \dim_S\text{Ext}_S^{d-i}(\widehat R /\widehat I,S)$ for all $i$. Indeed, by Theorem \ref{T-EisenbudComplexPerturbation}, for $N$ large enough $H^{d-i}(\text{Hom}_S(F_{\sbullet}^{\epsilon},S))^*$ is isomorphic to a subquotient of $H^{d-i}(\text{Hom}_S(F_{\sbullet},S))^*$ for all $i$, hence we have  
\begin{align*}
\dim_S\text{Ext}_S^{d-i}(\widehat R /\widehat J,S) & = \dim_S H^{d-i}(\text{Hom}_S(F_{\sbullet}^{\epsilon},S)) & {} \\
&= \dim_{\gr_{\n}(S)} H^{d-i}(\text{Hom}_S(F_{\sbullet}^{\epsilon},S)) ^*  & \text{(by Lemma \ref{L-dimensionstar})}\\
& \le \dim_{\gr_{\n}(S)} H^{d-i}(\text{Hom}_S(F_{\sbullet},S)) ^* & {} \\
&= \dim_S H^{d-i}(\text{Hom}_S(F_{\sbullet},S)) & \text{(by Lemma \ref{L-dimensionstar})}\\
&= \dim_S\text{Ext}_S^{d-i}(\widehat R /\widehat I,S). & & \qedhere 
\end{align*} 
\end{proof}

\begin{remark}
Write $\widehat R=S/L$, where $(S,\n)$ is a regular local ring and let 
$$\begin{tikzcd}
 F_{\sbullet}\colon  0 \arrow[r]  & F_d \arrow[r, "f_d"] & F_{d-1} \arrow[r, "f_{d-1}"] & \cdots \arrow[r, "f_2"] & F_1 \arrow[r, "f_1"] & F_0 
\end{tikzcd}$$
be a free resolution of $\widehat R/\widehat I$ as an $S$-module. Let $(-)'$ denote the functor $\text{Hom}_S(-,S)$. 

From \cite[Theorems 2.4 and 3.6]{duarte2021betti} and the proofs presented in this section, we see that there exists a specific value for the number $N$, depending only on the numbers $\text{AR}(\n,\im(f_i)\subseteq F_{i-1})$, $\text{AR}(\n,\im(f_i')\subseteq F_i')$, $\text{AR}(\n,\ker(f_i')\subseteq F_{i-1}')$ and $\ell_{\m}^i(R/I)$, for which Theorems \ref{T-LClength}, \ref{PowerM}, \ref{isomorphism} and \ref{Buchsbaum} hold. While this relation can be made more explicit, we decided not to pursue this direction in this paper, leaving it to the interested reader to analyse the specific details. 
\end{remark}

The following example shows that the hypothesis that the Hilbert function is preserved cannot be ommited in the statements of Theorems \ref{T-LClength}, \ref{PowerM} and \ref{isomorphism}.
\begin{example}
Consider the ring $R=k\llbracket x,y,z \rrbracket$, which has maximal ideal $\m=(x,y,z)$, and the ideal $I=(x^2,y)$. Then $R/I$ is Cohen Macaulay of dimension one. For each $N>2$, we consider the ideal $J_N=(x^2,xy,y-z^N)$, which satisfies $J\equiv I\bmod \m^N$. Then, since $x\not\in J_N$ satisfies $x\m^N\subset J_N$, $R/J_N$ has depth zero. Thus $H^0_{\m}(R/J_N)\neq 0$, while $H^0_{\m}(R/I)=0$.

Notice that $xz^N\in J_N^* \backslash I^*$, so that by \cite[Proposition 3.2]{duarte2021betti} $I$ and $J_N$ do not have the same Hilbert function.
\end{example}



\textbf{Acknowledgements:} I want to thank my advisors Alessandro De Stefani and Maria Evelina Rossi for their guidance and helpful suggestions during this project. I am also grateful to Linquan Ma for several insightful discussions, which led to the development of Lemma \ref{L-NumberGeneratorsInequality} and Theorem \ref{serreperturbation}, and to Mark Walker for pointing out that the map of complexes in the proof of Theorem \ref{isomorphism} is actually an isomorphism. Finally, I thank the department of Mathematics of the University of Genova for supporting my PhD program. 

\bibliographystyle{plain}
\bibliography{citations}
\end{document}